\title{A Family of Subgradient-Based Methods for Convex Optimization Problems in a Unifying Framework}
\author{Masaru Ito\footnote{corresponding author} \ ({\it ito1@is.titech.ac.jp})\\ Mituhiro Fukuda ({\it mituhiro@is.titech.ac.jp})\\
Department of Mathematical and Computing Sciences,  Tokyo Institute of Technology\\2-12-1-W8-41 Oh-okayama, Meguro, Tokyo 152-8552 Japan}
\date{Research Report B-477\\Department of Mathematical and Computing Sciences\\Tokyo Institute of Technology\\February 2014, revised December 2015}
\def\ie{{\rm\it{i.e.}}}
\def\Real{\mathbb{R}}
\newcommand{\dom}[1]{\textrm{dom}\,#1}
\newcommand{\argmin}{\operatornamewithlimits{argmin}}
\newcommand{\argmax}{\operatornamewithlimits{argmax}}
\def\ds{\displaystyle}
\newcommand{\norm}[1]{\|#1 \|}
\newcommand{\fv}[2]{\left\langle#1,#2\right\rangle}
\theoremstyle{plain}
\newtheorem{theorem}{Theorem}[section]
\newtheorem{lemma}[theorem]{Lemma}
\newtheorem{proposition}[theorem]{Proposition}
\newtheorem{corollary}[theorem]{Corollary}
\newtheorem{method}{Method}
\newtheorem{submethod}{Method}[method]
\newtheorem{property}{Property}
\theoremstyle{definition}
\newtheorem{remark}[theorem]{Remark}
\newtheorem{example}[theorem]{Example}
\begin{document}

\maketitle


\begin{abstract}
We propose a new family of subgradient- and gradient-based methods which converges with optimal complexity for convex optimization problems whose feasible region is simple enough.
This includes cases where the objective function is non-smooth, smooth, have composite/saddle structure, or are given by an inexact oracle model.
We unified the way of constructing the subproblems which are necessary to be solved at each iteration of these methods.
This permitted us to analyze the convergence of these methods in a unified way compared to previous results which required different approaches for each method/algorithm.
Our contribution rely on two well-known methods in non-smooth convex optimization: the mirror-descent method by Nemirovski-Yudin and the dual-averaging method  by Nesterov.
Therefore, our family of methods includes them and many other methods as particular cases.
For instance, the proposed family of classical gradient methods and its accelerations generalize Devolder {\it et al.}'s, Nesterov's primal/dual gradient methods, and Tseng's accelerated proximal gradient methods.
Also our family of methods can partially become special cases of other universal methods, too.
As an additional contribution, the novel extended mirror-descent method removes the compactness assumption of the feasible region and the fixation of the total number of iterations which is required by the original mirror-descent method in order to attain the optimal complexity.
\\

\noindent
\textbf{Keywords:} non-smooth/smooth convex optimization; structured convex optimization; subgradient/gradient \hspace{-1.5truemm}-based proximal method; mirror-descent method; dual-averaging method; complexity bounds
\\

\noindent
\textbf{Mathematical Subject Classification (2010):} 90C25; 68Q25; 49M37
\end{abstract}



\section{Introduction}
\label{sec-intro}

\subsection{Background on the MDM, the DAM, and related methods}
The gradient-based method proposed by Nesterov in 1983 for smooth convex optimization problems brought a surprising class of `optimal complexity' methods with preeminent performance over the classical gradient methods for the worst case instances \cite{Nes83}.
More precisely, the minimization of a smooth convex function, whose gradient is Lipschitz continuous with constant $L$, by these optimal complexity methods ensures an $\varepsilon$-solution for the objective value within $O(\sqrt{LR^2/\varepsilon})$ iterations%
\footnote{It is important to observe that in all of those methods, the iteration complexity is with respect to the convergence rate of the approximate optimal values and not with respect to the approximate optimal solutions.}%
, while the classical gradient methods require $O(LR^2/\varepsilon)$ iterations; $R$ is the distance between an optimal solution and the initial point.

Since then, the Nesterov's optimal complexity method, as well as further improvements and extensions \cite{AT06,Baes09,Nes04,Nes05s}, applied or extended for solving (non-smooth) convex problems \cite{BT12,LLM11,Nes05e,Nes05s,Nes15,Tseng08,Tseng10} with composite structure \cite{BT09,GL12,GL13,Lan12,Nes05e,Nes05s,Nes13} or with the inexact oracle model \cite{DGNs,DGN14} changed substantially the approach on how to solve large-scale structured convex optimization problems arising from machine learning, compressed  sensing, image processing, statistics, {\it etc.}

Now, if we consider the minimization of general non-smooth convex problems, the situation is apparently different.
The optimal complexity in the non-smooth case is $O(M^2R^2/\varepsilon^2)$ iterations for an $\varepsilon$-solution, where $M$ is a Lipschitz constant for the objective function.
Among several methods or variations, there are two well-known optimal complexity methods: the Mirror-Descent Method (MDM) and the Dual-Averaging Method (DAM).

Since the MDM and the DAM are the main motivations of the present
article, we will focus our subsequent discussion on results related to them.

The MDM originally proposed by Nemirovski and Yudin \cite{Nem79} was later recognized as related to the subgradient algorithm by Beck and Teboulle \cite{BT03}.
The MDM ensures the optimal complexity $O(M^2R^2/\varepsilon^2)$ for a fixed total number of iterations if we choose the (weight) parameters which depend also on $M$ and $R$. If we assume the boundedness of the feasible region, further  variants \cite{NL14,NJLS09} guarantee the complexity\footnote{Knowing $M$ and (an upper bound of) the diameter $D (\geq R)$ of the feasible region can guarantee the complexity $O(M^2D^2/\varepsilon^2)$.} $O(1/\varepsilon^2)$ without these requirements.

The DAM proposed by Nesterov \cite{Nes09} and its modification \cite{NS15} allow us to ensure the complexity $O(1/\varepsilon^2)$ even if the feasible region is unbounded (knowing $R$ can further guarantee the optimal complexity $O(M^2R^2/\varepsilon^2)$).
A key idea to obtain this enhancement in the DAM was the introduction of a sequence which we call {\it scaling parameter} $\beta_k$ in this paper.

As a more recent result, the (approximate) gradient-based methods known as the {\it primal} and {\it dual gradient methods} in \cite{DGN14, Nes13} can be interpreted as particular cases of the MDM and the DAM for the corresponding smooth convex problems, respectively, as it will be clear along the article. However, they only ensure the same complexity $O(LR^2/\varepsilon)$ as the classical gradient methods (for smooth problems) and they require distinct approaches to prove each of their rates of convergence.

Many of gradient-based methods for smooth and structured convex problems were also unified and generalized in some way by Tseng \cite{Tseng08,Tseng10}.
The three algorithms proposed there preserve the $O(\sqrt{LR^2/\varepsilon})$ iteration complexity, but they also require separate analysis for each of them.
Particular cases of the Tseng's optimal methods can be seen as accelerated versions of the MDM and the DAM for smooth and structured problems \cite{DGN14,Nes13}.

It will be important to observe that the difference between the MDM and the DAM or related algorithms lay on the construction of subproblems solved at each iteration as it will be clear in Section~\ref{ssec-opt-algs-nonsm}. As far as we know, there is no results formalizing a combined treatment to them to prove their convergence as we will propose in the present article.

\subsection{Our contributions}
\label{ssec-contribution}
In this paper, we establish a {\it unifying framework of (sub)gradient-based methods}, namely, Methods~\ref{gen-alg-nonsm},~\ref{gen-alg-clas}, or \ref{gen-alg-fast} with Property~\ref{framework}. As an immediate consequence, we generalized some existing methods, specifically the ones listed above, and we provide a unified convergence analysis for all of them.

We will discuss these ideas in details in the next lines.

All of above existing methods require at each iteration the computation of 
minimizer(s) of one (or two) strongly convex function(s), which we call
{\it auxiliary functions}, over a (simple) closed convex domain.
Property~\ref{framework} (Section~\ref{sec-framework}), which we propose in this paper, reflects the common properties that the auxiliary functions of the MDM and the DAM should satisfy to secure optimal convergence rates.
As a byproduct, we propose two strategies to construct sequentially these auxiliary
functions: the {\it extended Mirror-Descent (MD) model} (\ref{auxfunc-eMD}),
which we believe is completely new in the literature, and  
the {\it Dual-Averaging (DA) model} (\ref{auxfunc-DA}).
In fact, we will show that they can be combined in arbitrary order (Proposition~\ref{auxfunc-admit}) for our final purpose.

Method~\ref{gen-alg-nonsm} (a) and (b) (in Section~\ref{ssec-alg-nonsm}) establish the optimal complexity $O(M^2R^2/\varepsilon^2)$ for non-smooth convex problems, while Methods~\ref{gen-alg-clas} and~\ref{gen-alg-fast} (in Section~\ref{ssec-alg-struc}) establish the complexity $O(LR^2/\varepsilon)$ and the optimal one $O(\sqrt{LR^2/\varepsilon})$, respectively, for structured problems which include smooth, composite structure, saddle structure, or inexact oracle model cases.

An essential idea behind the proofs to show the convergence rates is to check the validity of the inequalities ($R_k$) (\ref{est-rel}) (or ($\hat{R}_k$) (\ref{hat-R-nonsm}) for the non-smooth problems and ($\hat{R}'_k$) (\ref{hat-R-smooth}) for the structured problems) employed in these methods. This approach some how  resemble the estimate sequences \cite{AT06,Baes09,Nes04}, the inequality
($R_k$) \cite{Nes05s}, and the inequality (23) \cite{NS15}.

Since our methods are based on the extended MD and/or the DA updates, they
seems quite restrictive, but as Table~\ref{alg-relation} shows,
many of known optimal methods are particular cases or can be
particularized to coincide with our methods.

A clear advantage of our unifying framework over the exiting ones is that
we can prove all the convergences and their rates in a universal
way without specifying the proofs to a particular method/algorithm.
As far as we know, this is the first time that such general treatment unifying the MDM and the DAM
is proposed.

We remark that our approach should be distinguished from the {\it universal} (sub)gradient-based
methods which can be applied simultaneously to non-smooth or smooth 
problems such
as \cite{GL12,GL13,Lan12} or to structured problems which can admit
inexact oracles, weakly smooth functions, {\it etc.} \cite{DGNs,DGN14,Nes15}.
As noted in Remark~\ref{extended-model}, a more broader approach can be established for our framework if we extend the inexact oracle model as discussed in \cite{Ito15}.

Also, as pointed out before, Tseng unified 
many of these methods in three algorithms, but they require different
treatment for each of them.

As a minor contribution, the generalization of the MDM to the extended MDM (Method~\ref{eMD-alg}) guarantees the complexity $O(1/\varepsilon^2)$ with the advantage of not requiring the values $M$, $R$, and the final number of iterations a priori to determine the (weight) parameters in the method (requiring $R$ further ensures the optimal complexity $O(M^2R^2/\varepsilon^2)$).
This drawback was already partially solved in \cite{GL12,GL13,Lan12,NL14,NJLS09},
but our method has additionally the advantage of not requiring the boundedness of the domain.

The structure of this article is as follows.
First, in Section~\ref{ssec-setting}, we define our problem introducing two classes of convex problems: the non-smooth and the structured problems.
We review some existing methods, in particular
the MDM and the DAM for non-smooth objective functions and Tseng's accelerated gradient methods for smooth ones
in Section~\ref{sec-opt-algs}.
In Section~\ref{sec-framework}, we propose Property~\ref{framework}
which represents a framework of auxiliary functions for the development of our methods, as
well as some supporting lemmas. 
We then propose in Section~\ref{sec-nonsm} the general subgradient-based method and 
prove its convergence rate,
in particular for the extended MDM and the DAM. Subsequently, we propose the classical gradient method and the fast gradient method
for the structured problems in Section~\ref{sec-struc}.

\begin{table}[htbp]

\caption{Relation between our family of (sub)gradient-based methods and other known methods. The star (*) corresponds to our result. `Complexity' indicates the number of iterations
to obtain an $\varepsilon$-solution when the objective function has no inexactness for its oracle. \cite{Nes13} is included considering that its Lipschitz constant is known in advance. The third problem class `applicable to both'
indicates that the existing methods can be applicable simultaneously to non-smooth, smooth, and  structured problems.}

\vspace{1truemm}
\scalebox{0.81}{
\begin{tabular}{c|c|l|l}
\hline
problem class &
complexity &
\multicolumn{1}{|c|}{some known methods} &
\multicolumn{1}{|c}{generalized methods}
\\
\hline\hline


\multirow{6}{*}{non-smooth}
& \multirow{6}{*}{\shortstack{optimal \\ $O\left(\frac{M^2R^2}{\varepsilon^2}\right)$}}
	& \multirow{2}{*}{mirror-descent \cite{BT03,Nem79}}
	& *Method \ref{gen-alg-nonsm} (a) with the model (\ref{auxfunc-eMD})
\\
	&&& \quad $\equiv$ extended mirror-descent: Method \ref{eMD-alg}
\\ \cline{3-4}
	&& dual-averaging \cite{Nes09}
	& *Method \ref{gen-alg-nonsm} (a) with the model (\ref{auxfunc-DA})
\\ \cline{3-4}
	&& double averaging \cite{NS15}
	& *Method \ref{gen-alg-nonsm} (b) with the model (\ref{auxfunc-DA})
\\ \cline{3-4}
	&& sliding averaging \cite{NJLS09}
\\
	&& Nedi\'c-Lee's averaging \cite{NL14}
\\ \hline


\multirow{8}{*}{\shortstack{structured/\\ smooth}}
& \multirow{2}{*}{\shortstack{\\[-0.5truemm]classical \\ $O\left(\frac{LR^2}{\varepsilon}\right)$}}
	& primal gradient \cite{DGN14,FM81,Nes13}
	& *Method \ref{gen-alg-clas} with the model (\ref{auxfunc-eMD})
\\[1truemm] \cline{3-4}
	&& dual gradient \cite{DGN14,Nes13}
	& *Method \ref{gen-alg-clas} with the model (\ref{auxfunc-DA})
\\[1truemm] \cline{2-4}

& \multirow{8}{*}{\shortstack{optimal \\ $O\left(\sqrt{\frac{LR^2}{\varepsilon}}\right)$}}
	& estimate sequence method \cite{Baes09,Nes04}
\\ \cline{3-4}
	&& Nesterov's method \cite{Nes05s}
	& Tseng's modified method; see \cite[(35-36)]{Tseng08}
\\ \cline{3-4}
	&& interior gradient method \cite{AT06}
	& \multirow{2}{*}{Tseng's method \cite[Algorithm 1]{Tseng08}}
\\
	&& Lan-Luo-Monteiro's method \cite{LLM11}
	& 

\\ \cline{3-4}
	&& FISTA \cite{BT09}
	& Tseng's first APG \cite{Tseng10}
\\ \cline{3-4}
	&& \multirow{2}{*}{Tseng's second APG \cite{Tseng10}}
	& *Method \ref{MD-fast} $\equiv$ Method \ref{gen-alg-fast} with the model (\ref{auxfunc-eMD})
\\
	&& 
	& Tseng's method \cite[Algorithm 1]{Tseng08}
\\ \cline{3-4}
	&& \multirow{2}{*}{Tseng's third APG \cite{Tseng10}}
	& *Method \ref{DA-fast} $\equiv$ Method  \ref{gen-alg-fast} with the model (\ref{auxfunc-DA})
\\
	&& 
	& Tseng's method \cite[Algorithm 3]{Tseng08}
\\
\hline


\multirow{3}{*}{\shortstack{applicable \\ to both}}
& \multirow{3}{*}{optimal}
	& fast gradient method \cite{DGN14}
\\
	&& Ghadimi-Lan's method \cite{GL12,GL13,Lan12}
\\
	&& universal gradient method \cite{Nes15}
\\ \hline
\end{tabular}
\label{alg-relation}

}

\end{table}

\subsection{Problem setting and assumptions}
\label{ssec-setting}

In this paper, we consider a finite dimensional real vector space 
$E$ endowed with a norm $\norm{\cdot}$.
The dual space of $E$ is denoted by $E^*$ endowed with the dual norm $\norm{\cdot}_*$ defined by
\begin{equation*}
\|s\|_* = \max_{\|x\| \leq 1} \langle s, x \rangle,\quad s \in E^*
\end{equation*}
where $\fv{s}{x}$ denotes the value of $s \in E^*$ at $x \in E$.

We then define a general convex 
optimization problem as:
\begin{equation}
\label{primal-problem}
	\min_{x \in Q} f(x) 
\end{equation}
where $Q$ is a nonempty closed convex, and possibly unbounded, subset of $E$, 
and $f: E \rightarrow \Real \cup \{+\infty\}$ is a proper lower semicontinuous convex function with $Q \subset \dom{f}:=\{x\in E: f(x)<+\infty\}$.
For each $x \in \dom{f}$, the subdifferential of $f$ at $x$ is denoted by $\partial{f}(x):=\{g \in E^* : f(y) \geq f(x)+\fv{g}{y-x},~\forall y\in E\}$.
We assume throughout this paper that the problem (\ref{primal-problem}) always has an {\it optimal solution $x^* \in Q$}, and the 
structure of $Q$ is simple enough or has some special structure which
permits one to solve a subproblem over it with moderate easiness.
See \cite{Nes05s} for some examples.

We introduce the {\it prox-function} $d(x)$ which is used to define the subproblems in our subgradient-based methods.
Let $d : E \rightarrow \Real \cup \{+ \infty\}$ be a  proper lower semicontinuous convex function which satisfies the following properties:
\begin{itemize}
\item $d(x)$ is a strongly convex function on $Q$ with parameter $\sigma > 0$, \ie,
\begin{equation*}
d(\tau x + (1-\tau)y) \leq \tau d(x) + (1-\tau) d(y) - \frac{1}{2}\sigma\tau(1-\tau)\|x-y\|^2,\quad \forall x, y \in Q,~ \forall \tau \in [0,1].
\end{equation*}
\item $d(x)$ is continuously differentiable on $Q$.
\end{itemize}
We assume that $d(x_0) = \min_{x \in Q}d(x) = 0$ for $x_0:=\argmin_{x \in Q}d(x) \in Q$, which is used for the initial point of our methods.\footnote{We can always assume this requirement for an arbitrary point $x_0 \in Q$ by replacing $d(x)$ by $\xi(x_0,x)$.}

We denote by $\xi(z,x)$ the Bregman distance \cite{Bregman} between $z$ and $x$:
\[
\xi(z,x) := d(x) - d(z) - \langle \nabla{d}(z), x-z \rangle,\quad z,x \in Q.
\]
The Bregman distance satisfies $\xi(z,x) \geq \frac{\sigma}{2}\norm{x-z}^2$ for any $x,z \in Q$ by the strong convexity of $d(x)$.

Finally, we often refer $R$ as $\sqrt{\frac{1}{\sigma}d(x^*)}$, $\sqrt{\frac{1}{\sigma}\xi(x_0,x^*)}$, or their upper bounds which quantifies the distance between the optimal solution $x^*$ and the initial point $x_0$ in view of properties $d(x_0)=0$ and $d(x) \geq \frac{\sigma}{2}\norm{x-x_0}^2$ for every $x \in Q$.

We remark that the problem (\ref{primal-problem}) (as well as the objective function $f(x)$ and the feasible region $Q$) and the prox-function $d(x)$ is fixed throughout the paper.


In this paper, we particularize the problem (\ref{primal-problem}) into the following two classes for convenience. Observe that each of problems in these classes is equipped with a proper lower semicontinuous convex function $l_f(y;\cdot):\Real^n \to \Real\cup\{+\infty\}$ satisfying $f(x) \geq l_f(y;x),~\forall x \in Q$, which we call a {\it lower convex approximation of $f(x)$} on $Q$ at $y \in Q$.

\begin{itemize}

\item {\it The class of non-smooth problems}.
We assume that subgradients of the
objective function $f$, $g(y)\in\partial f(y)$, are computable at any 
point $y \in Q$.
We further assume that the optimal solution of the following (sub)problem is computable for any $s \in E^*$ and $\beta > 0$:
\begin{equation}\label{subprob-nonsm}
\min_{x \in Q}\{ \fv{s}{x} + \beta d(x) \}.
\end{equation}
For the non-smooth problem, we define the linear function $l_f(y;x)$ for each fixed $y \in Q$ by
\begin{equation}\label{l_f-nonsm}
l_f(y;x) := f(y)+\fv{g(y)}{x-y}.
\end{equation}

\item {\it The class of structured problems}.
We assume that the objective function $f(x)$ of the problem (\ref{primal-problem}) has the following structure:
for any $y \in Q$, there exists a proper lower semicontinuous convex function $l_f(y;\cdot):E\to\Real\cup\{+\infty\}$ satisfying the inequalities
\begin{equation} \label{l_f-struc}
l_f(y;x) \leq f(x) \leq l_f(y;x) + \frac{L(y)}{2}\norm{x-y}^2+\delta(y),\quad \forall x \in Q
\end{equation}
for some $L(y)>0$ and $\delta(y) \geq 0$\footnote{$L(\cdot)$ and $\delta(\cdot)$ can be any positive and nonnegative functions in $y$ on $Q$, respectively. However, there is a restriction to ensure an efficient convergence of the proposed methods as discussed in Section~\ref{ssec-conv-struc}.}.
We also assume that for any $y \in Q,~ s \in E^*$, and $\beta > 0$, we can compute the optimal solution of the (sub)problem
\begin{equation} \label{gen-subprob}
\min_{x \in Q}\{l_f(y;x) + \fv{s}{x} + \beta d(x)\}.
\end{equation}
\end{itemize}

The subproblems (\ref{subprob-nonsm}) and (\ref{gen-subprob}) are subproblems solved at each iteration in the (sub)gradient-based methods. Their difficulties depend on the structure of $Q$, the choice of the prox-function $d(x)$, and the definition of $l_f(y;x)$. See \cite{Nes05s} for some examples of the problem (\ref{subprob-nonsm}) and Example~\ref{ex-struc} below for special cases of (\ref{gen-subprob}).

The class of structured problems includes the following important cases given in Example~\ref{ex-struc}. Among them, we are particularly interested on smooth problems (i).

\begin{example}[Structured convex problems]\label{ex-struc}
All the cases excepting the last one were already considered in
the literature.
\begin{enumerate}

\item[(i)] {\it Smooth problems.} Suppose that the convex objective function $f(x)$ is continuously differentiable on $Q$ and its gradient $\nabla{f}(x)$ is Lipschitz continuous on $Q$ with a constant $L > 0$:
\[
\norm{\nabla{f}(x) - \nabla{f}(y)}_* \leq L \norm{x -y},\quad \forall x,y \in Q.
\]
Then, defining $l_f(y;x) := f(y) + \fv{\nabla{f}(y)}{x-y}$ yields the condition (\ref{l_f-struc}) with $L(\cdot)\equiv L$ and $\delta(\cdot) \equiv 0$. Then subproblem (\ref{gen-subprob}) is
of the form
\begin{equation}\label{subprob-smooth}
\min_{x \in Q}\{f(y) + \fv{s+\nabla{f}(y)}{x-y} + \beta d(x)\},
\end{equation}
which is equivalent to (\ref{subprob-nonsm}) in this case.

\item[(ii)] {\it Composite structure}. Let the objective function $f(x)$ has the form
\begin{equation}\label{obj-composite}
f(x) = f_0(x) + \varPsi(x)
\end{equation}
where $f_0(x):E \to \Real\cup\{+\infty\}$ is convex and continuously differentiable on $Q$ with Lipschitz continuous gradient and $\varPsi(x):E\to\Real\cup\{+\infty\}$ is a lower semicontinuous convex function with $Q\subset \dom{\varPsi}$.
This structure has significant applications in machine learning, compressed  sensing, image processing, and statistics \cite{BT09,Tseng10}.

Letting $L>0$ be the Lipschitz constant of $\nabla{f_0}$ on $Q$, we can define $l_f(y;x) := f_0(y)+\fv{\nabla{f_0}(y)}{x-y} + \varPsi(x)$ so that we have (\ref{l_f-struc}) with $L(\cdot)\equiv L$ and $\delta(\cdot) \equiv 0$. The corresponding subproblem has the form
\begin{equation*}
\min_{x \in Q} \{ f_0(y) + \fv{s+\nabla{f_0}(y)}{x-y} + \beta d(x) + \varPsi(x) \}.
\end{equation*}

A generalization of classical methods such as proximal gradient method for this model was proposed by Fukushima and Mine~\cite{FM81} (without assuming convexity for $f_0(x)$). The Nesterov's optimal method (\ref{Nes-opt-alg}) can be also generalized for this case \cite{Nes13}.

Smoothing techniques are also an important approach for this example. Nesterov~\cite{Nes05s} showed a significant improvement on the convergence rate for a particular class and Beck and Teboulle~\cite{BT12} proposed an unifying generalization for it.

\item[(iii)] {\it Inexact oracle model}. Let us assume that our oracle for $f(x)$ has {\it inexactness}~\cite{DGN14}, that is, we can compute $(\bar{f}(y),\bar{g}(y)) \in \Real\times E^*$ at each $y \in Q$ such that
\begin{equation}\label{i-ora}
0 \leq f(x) - (\bar{f}(y) + \langle {\bar{g}(y)}, {x-y} \rangle) \leq \frac{L_y}{2}\norm{x-y}^2 + \delta_y,\quad \forall x \in Q
\end{equation}
is satisfied for some $L_y > 0$ and $\delta_y \geq 0$.
Then defining $l_f(y;x):=\bar{f}(y) + \langle{\bar{g}(y)},{x-y}\rangle$, $L(y) := L_y$, and $\delta(y) := \delta_y$ we have exactly (\ref{l_f-struc}).

This model was investigated in \cite{DGN14} and the {\it primal, dual,} and {\it fast gradient methods} were proposed.
These methods were also implemented in \cite{Nes15} for a particular class of this model equipped with an iterative scheme to estimate the Lipschitz constants $L_y$ at each iteration. The fast gradient methods can be seen as generalizations of the Nesterov's optimal method (\ref{Nes-opt-alg}) to those cases.

\item[(iv)] {\it Saddle structure}. Let us consider 
an objective function with the following structure:
\[
f(x) = \sup_{u \in U}\phi(u,x)
\]
where $U$ is a compact convex set of a finite dimensional real vector space $E'$ and $\phi : U \times E \rightarrow \Real\cup\{+\infty\}$ is a concave-convex function satisfying the following conditions.
\begin{itemize}
\item $\phi(\cdot,x)$ is a upper semicontinuous concave function for all $x \in Q$.
\item $\phi(u, \cdot)$ is a lower semicontinuous convex function with $Q \subset \dom{\phi(u, \cdot)}$ for all $u \in U$.
\item For all $u \in U$, $\phi(u, \cdot)$ is continuously differentiable on $Q$ and its gradient is Lipschitz continuous on $Q$, \ie, there exists a constant $L_u \geq 0$ such that
\[
\|\nabla_x \phi(u,x_1) - \nabla_x \phi(u, x_2)\|_* \leq L_{u} \|x_1 - x_2\|, \quad \forall x_1,x_2 \in Q.
\]
\item $L:=\max_{u \in U}L_u$ is finite and positive.
\end{itemize}
Then defining
\begin{equation}
\label{l_f-saddle}
l_f(y;x) := \max_{u \in U}\left\{ \phi(u,y) + \langle \nabla_x{\phi}(u,y), x-y \rangle \right\},
\end{equation}
it satisfies condition (\ref{l_f-struc}) with $L(\cdot) \equiv L$, $\delta(\cdot) \equiv 0$, and
we will have the following subproblem:
\[
\min_{x\in Q}\left\{ \max_{u \in U}\{ \phi(u,y) + \fv{s+\nabla_x \phi(u,y)}{x-y}\} + \beta d(x) \right\}.
\]

This case is a generalization of the structured convex problem discussed in \cite{Nes83}, namely, $E'\equiv\Real^m$ and, for each $u=(u^{(1)},\ldots, u^{(m)}) \in U$,  defining $\phi(u,x)=\sum_{i=0}^m u^{(i)}f_i(x)$ for given differentiable convex functions $f_1(x),\ldots,f_m(x)$ on $E$ with Lipschitz continuous gradient. The convexity of $\phi(u,\cdot)$ is satisfied by imposing the following assumption as in \cite{Nes83}: if there exists $u \in U$ such that $u^{(i)} < 0$, then $f_i(x)$ is a linear function. Letting $L^{(i)}$ be a Lipchitz constant of $\nabla{f}_i(x)$ for $i=1,\ldots,m$, we have $L=\max_{u \in U}L_u = \max_{u \in U}\sum_{i=1}^m u^{(i)}L^{(i)}$.

The definition of $l_f(y;x)$ can be simplified when $Q \subset {\rm int}(\dom{f}$) and $\phi(\cdot,x)$ is strictly concave for all $x \in Q$. In this case, denoting $u_x = \argmax_{u \in U}\phi(u,x)$, we have $\nabla{f}(x) = \nabla_x{\phi}(u_x,x)$ and therefore we can define
\[ l_f(y;x):= \phi(u_y,y) + \fv{\nabla_x\phi(u_y,y)}{x-y} \]
which satisfies (\ref{l_f-struc}) with $L(\cdot) \equiv L$ and $\delta(\cdot)\equiv 0$.
Its subproblem is of the form (\ref{subprob-smooth}).
This situation is also discussed in Tseng's methods \cite{Tseng08}.

\item[(v)] {\it Mixed structure}. The above examples can be combined with each other; for instance, considering the function $f_0(x)$ in (ii) with inexactness (iii) or with the saddle structure (iv), or considering the function $\phi(u,x)$ in (iv) with inexactness (iii) or with the composite structure (ii) satisfies our requirement (\ref{l_f-struc}).
\end{enumerate}
\end{example}

\begin{remark}
\label{extended-model}
Defining the class of the structured problems with the inexact oracle model (iii) allow us to include non-smooth and weakly smooth convex problems (see \cite{DGNs,DGN14}).
Moreover, considering a generalization of (\ref{l_f-struc}) by replacing $\delta(y)$ with $\delta(x,y)$ where $\delta(\cdot,y)$ is a nonnegative and lower semicontinuous convex function on $Q$ for every $y \in Q$, we can further include other structured convex problems such as the composite convex problem discussed in \cite{GL12,GL13,Lan12} (in the deterministic version). Then the objective function $f(x)$ will satisfy  the condition
\[ f(y)-f(x)-\fv{g(y)}{y-x} \leq \frac{L}{2}\norm{y-x}^2+M\norm{y-x},\quad \forall x,y \in Q, \]
for a subgradient mapping $g(x) \in \partial{f}(x)$, $L,M \geq 0$, and $\delta(x,y):=M\|x-y\|$. Observe that the smooth and the non-smooth problems are its special cases when $M=0$ and $L=0$, respectively. \cite{Ito15} investigates in detail the extensions of the methods discussed here to this setting.
\end{remark}


\section{Existing optimal methods}
\label{sec-opt-algs}

In this section, we review some well-known subgradient-based and
gradient-based methods. 
In particular, we focus on the Mirror-Descent Method (MDM),
the Dual-Averaging Method (DAM), the double and triple averaging methods
for non-smooth problems in Section~\ref{ssec-opt-algs-nonsm},
and on the Nesterov's accelerated gradient and the Tseng's 
Accelerated Proximal Gradient (APG) methods for 
smooth problems (or the structured ones) in Section~\ref{ssec-opt-algs-sm}.

The purpose of this section is to unify the notation of these existing
methods in order to introduce a unifying framework for them
in Section~\ref{sec-framework}.
For that, we sometimes change
the variables' names, shift
their indices, and add constants in the objective functions
of optimization subproblems when compared to the
original articles.


\subsection{Optimal methods for non-smooth problems}
\label{ssec-opt-algs-nonsm}
Let us first see some existing methods for non-smooth convex problems. Recall that, from the definition of the class of non-smooth problems, we have, for $y \in Q$, a subgradient mapping $g(y) \in \partial{f}(y)$ and a lower convex approximation $l_f(y;x):=f(y)+\fv{g(y)}{x-y}$ of $f(x)$.

\subsubsection{The mirror-descent method}
The Mirror-Descent Method (MDM) \cite{Nem79} in the form reinterpreted by Beck and Teboulle \cite{BT03} generates $\{x_k\}_{k \geq 0} \subset Q$ by setting $x_0:=\argmin_{x\in Q}d(x)$ and
\begin{equation}
\label{MD-org-iter}
\begin{array}{rcl}
g_k &:=& g(x_k) \in \partial{f}(x_k),\\
z_k &:=& \argmin_{x \in Q}\{\lambda_k[f(x_k)+\fv{g_k}{x-x_k}]+\xi(x_k,x)\},\\
x_{k+1} &:=& z_k
\end{array}
\end{equation}
for each $k \geq 0$, where $\lambda_k > 0$ is a {\it weight parameter}.

The variable $z_k$ is redundant here, but we keep it in order to use the same notation of our unifying framework.
Notice that, by the definition of the Bregman distance, the computation of $z_k$ reduces to the form of (\ref{subprob-nonsm}).

It is known that the MDM reduces to the classical subgradient method $x_{k+1}:=\pi_Q(x_k-\lambda_k g_k)$ when $E$ is a Euclidean space, $\norm{\cdot}$ is the norm of $E$ induced by its inner product, $d(x):=\frac{1}{2}\norm{x-x_0}^2$, and $\pi_Q$ is the orthogonal projection onto $Q$
(see also Auslender-Teboulle \cite{AT06} and Fukushima-Mine \cite{FM81} for some related works).

The MDM produces the following estimate \cite{BT03}:
\begin{equation}\label{MD-org-est}
\forall k \geq 0,\quad  \Delta_k := \frac{\sum_{i=0}^k \lambda_if(x_i)}{\sum_{i=0}^k\lambda_i} - f(x^*) \leq \frac{\xi(x_0,x^*) + \frac{1}{2\sigma}\sum_{i=0}^k\lambda_i^2\norm{g_i}_*^2}{\sum_{i=0}^k \lambda_i}.
\end{equation}
Therefore, if we define the approximate solution
\begin{equation*}
\hat{x}_k:=\frac{\sum_{i=0}^k\lambda_ix_i}{\sum_{i=0}^k\lambda_i},
\end{equation*}
we have by the convexity of $f$ that $f(\hat{x}_k)-f(x^*)\leq \Delta_k$; we can also obtain the estimate $\min_{0 \leq i \leq k}f(x_i)-f(x^*) \leq \Delta_k$.

Furthermore, the right hand side of (\ref{MD-org-est}) can be bounded by $M\sqrt{2\sigma^{-1}\xi(x_0,x^*)}/\sqrt{k+1}$ if $M:=\sup\{\norm{g}_*:g \in \partial{f}(x),~x \in Q\}$ is finite and if we choose the constant weight parameters
\begin{equation}\label{MD-org-weights}
\lambda_i := M^{-1}\sqrt{2\sigma \xi(x_0,x^*)}/\sqrt{k+1},\quad i=0,\ldots,k
\end{equation}
for a fixed $k \geq 0$.
If we further know an upper bound $R \geq \sqrt{\frac{1}{\sigma}\xi(x_0,x^*)}$, this result ensures an $\varepsilon$-solution in $O(M^2R^2/\varepsilon^2)$ iterations which provides the optimal complexity for the non-smooth case \cite{BT03}.
The above choice of weight parameters, however, is impractical since it depends on the final iterate $k$ and an upper bound for $\xi(x_0,x^*)$; a more practical choice $\lambda_i := r/\sqrt{i+1}$ for some $r > 0$ only ensures an upper bound $\frac{\xi(x_0,x^*)+(2\sigma)^{-1}r^2M^2(1+\log(k+1))}{2r(\sqrt{k+2}-1)} = O(\log k/\sqrt{k})$ for the right hand side of (\ref{MD-org-est}). Note that, however, when the feasible region $Q$ is compact, the weight parameters $\lambda_i:=r/\sqrt{i+1}~(r>0)$ ensure the rate $O(1/\sqrt{k})$ of convergence for the difference $f(\hat{x}_k)-f(x^*)$ by considering $\hat{x}_k$, a weighted average of $x_0,\ldots,x_k$ \cite{NL14,NJLS09}.

\subsubsection{The dual-averaging method and its variants}
\label{sssec-DAM}
The Dual-Averaging Method (DAM) proposed by Nesterov \cite{Nes09} overcomes 
the dependence of weight parameters of the MDM on $k$
and even achieves the rate $O(1/\sqrt{k})$ of convergence.
This method employs non-decreasing positive {\it scaling parameters} $\{\beta_k\}_{k \geq -1}~(\beta_{k+1} \geq \beta_k > 0)$ in addition to the weight parameters $\{\lambda_k\}_{k \geq 0}$.

From the initial point $x_0 := \argmin_{x \in Q}d(x)\in Q$, the DAM is performed by the iteration

\begin{equation}
\label{DA-org-iter}
\begin{array}{rcl}
g_k &:=& g(x_k) \in \partial{f}(x_k),\\
z_k &:=& \argmin_{x \in Q}\left\{\sum_{i=0}^k\lambda_i[f(x_i)+\fv{g_i}{x-x_i}]+\beta_kd(x)\right\},\\
x_{k+1} &:=& z_k
\end{array}
\end{equation}
for each $k \geq 0$.

It is important to note that the difference between the MDM and the DAM is in the construction of the subproblems. Both methods solve subproblems of the form $z_k:=\argmin_{x \in Q}\psi_k(x)$ defined by the {\it auxiliary functions}
\begin{equation}
\label{MD-org-auxfunc}
\psi_k(x) := \lambda_kl_f(x_k;x)+\xi(x_k,x) = \lambda_k[f(x_k)+\fv{g_k}{x-x_k}] + \xi(x_k,x)
\end{equation}
in the MDM and
\begin{equation}
\label{DA-org-auxfunc}
\psi_k(x) := \sum_{i=0}^k \lambda_il_f(x_i;x)+\beta_kd(x) = \sum_{i=0}^k \lambda_i[f(x_i)+\fv{g_i}{x-x_i}] + \beta_kd(x)
\end{equation}
in the DAM.

Nesterov proved that the DAM satisfies the following general estimate (set $D=d(x^*)$ in \cite[Theorem 1 and (3.2)]{Nes09}):
\begin{equation}
\label{gen-bound-DAM}
\forall k \geq 0,\quad \Delta_k := \frac{\sum_{i=0}^k \lambda_if(x_i)}{\sum_{i=0}^k\lambda_i} - f(x^*) \leq \frac{\beta_kd(x^*)+\frac{1}{2\sigma}\sum_{i=0}^k\frac{\lambda_i^2}{\beta_{i-1}}\norm{g_i}_*^2}{\sum_{i=0}^k\lambda_i}.
\end{equation}
In order to ensure the rate $O(1/\sqrt{k})$ of convergence, we do not even need a prior knowledge of an upper bound for $\xi(x_0,x^*)$ in contrast to the MDM;
for instance, choosing $\lambda_k := 1$ and $\beta_k :=\gamma \hat{\beta}_k$ where $\gamma > 0$ and 
\begin{equation}\label{eq:beta}
\hat{\beta}_{-1}:=\hat{\beta}_0:=1,~\hat{\beta}_{k+1}:=\hat{\beta}_k+\hat{\beta}_k^{-1}, \qquad \forall k \geq 0, 
\end{equation}
the estimate (\ref{gen-bound-DAM}) yields
\[ \forall k \geq 0,\quad \Delta_k \leq \left(\gamma d(x^*)+\frac{M^2}{2\sigma \gamma}\right)\frac{0.5+\sqrt{2k+1}}{k+1}. \]
Furthermore, if we know $M$ and $R \geq \sqrt{\frac{1}{\sigma}d(x^*)}$, the choice $\gamma:=\frac{M}{\sqrt{2}\sigma R}$ achieves the optimal iteration complexity $O(M^2R^2/\varepsilon^2)$ to obtain an $\varepsilon$-solution.

A key in the analysis of the DAM in \cite{Nes09} is the use of a dual approach such as the conjugate function of $\beta d(x)$ for $\beta > 0$.
In this paper, we prove the same result with simpler arguments (in Section~\ref{sec-nonsm}) for the  DAM and for (an extension of) the MDM without employing duality.

Nesterov and Shikhman \cite{NS15} further proposed variants of the DAM, the double and triple averaging methods, in order to obtain convergence results for the sequence $\{x_k\}$. The double averaging method \cite[eq. (28)]{NS15} iterates starting from $x_0:=\argmin_{x \in Q}d(x) \in Q$ as follows:
\begin{equation}\label{double-ave}
z_k := \argmin_{x \in Q}\psi_k(x),\quad x_{k+1}:=(1-\tau_k)x_k+\tau_kz_k,\quad k=0,1,2,\ldots
\end{equation}
where $\tau_k:=\lambda_{k+1}/\sum_{i=0}^{k+1}\lambda_i$ and $\psi_k(x)$ is defined by the auxiliary function (\ref{DA-org-auxfunc}) used in the DAM. This method bounds the difference $f(x_k)-f(x^*)$ by the same value as the right hand side of (\ref{gen-bound-DAM}) \cite[Theorem 3.1]{NS15} for all $k \geq 0$. Hence, it achieves optimality. The triple averaging, which is a modification  of (\ref{double-ave}), allows further flexibility on the choices for $\{\lambda_k\}$ and $\{\beta_k\}$ \cite[Theorem 3.3]{NS15}.

Observe that for all the above methods, we do not
need to evaluate any function value at any iteration and 
$x_{k+1}$ is determined uniquely even if $Q$ is unbounded,
since $d(x)$ is strongly convex \cite[Lemma 6]{Nes09}.


\subsection{Optimal methods for smooth problems}
\label{ssec-opt-algs-sm}

We now review some existing methods for the smooth problems which are a special case of the structured problems (see Example~\ref{ex-struc} (i)).

Suppose that the function $f(x)$ in (\ref{primal-problem}) is convex, continuously differentiable, and its gradient is Lipschitz continuous on $Q$ with constant $L>0$.

Many optimal complexity methods were proposed in the literature under this assumptions (see, {\it e.g.},  Table~\ref{alg-relation}). In particular, we recall the optimal methods proposed by Nesterov \cite{Nes05s} and Tseng \cite{Tseng08,Tseng10} for a comparison with our results.

Given positive weight parameters $\{\lambda_k\}_{k \geq 0}$, both methods solve either or both of the following subproblems:
\begin{equation*}
\begin{array}{cl}
{\rm (a)} & \min_{x \in Q}\big\{ \lambda_k[f(x_k)+\fv{\nabla{f}(x_k)}{x-x_k}] + \frac{L}{\sigma}\xi(z_{k-1},x) \big\},\\[1truemm]
{\rm (b)} & \min_{x\in Q}\left\{ \sum_{i=0}^k \lambda_i[f(x_i) + \fv{\nabla{f}(x_i)}{x-x_i}] + \frac{L}{\sigma}d(x) \right\}
\end{array}
\end{equation*}
where $\{x_k\}_{k \geq 0} \subset Q$ is the sequence generated by those methods and $\{z_k\}_{k \geq -1}$ is the sequence of optimal solutions of the subproblem (a) or (b) as specified by the method.
Similarly to the non-smooth case, it is not necessary
to evaluate the function values at $x_k$'s and the minimums of (a) and (b) are uniquely defined.

The Nesterov's optimal method (see modified method in \cite[Section 5.3]{Nes05s}) with a particular choice for the weight parameters $\lambda_k$ is described as follow. \\[1truemm]
{\bf Nesterov's method}: Set $\lambda_k := (k+1)/2$ for $k\geq 0$ and $x_0 := z_{-1} := \argmin_{x \in Q}d(x)$. Compute the solution $\hat{z}_0$ of (a) with $k=0$ and set $\hat{x}_0:=z_0:=\hat{z}_0$. For $k \geq 0$, iterate the following procedure:
\begin{equation}
\label{Nes-opt-alg}
\begin{array}{ll}
{\rm Set} & x_{k+1}:=(1-\tau_k)\hat{x}_k + \tau_k z_k,\quad {\rm where~~}\tau_k:=\frac{\lambda_{k+1}}{\sum_{i=0}^{k+1}\lambda_i},\\
{\rm Compute}& \hat{z}_{k+1}:=\argmin_{x \in Q}\Big\{\lambda_{k+1}[f(x_{k+1})+\fv{\nabla{f}(x_{k+1})}{x-x_{k+1}}] + \frac{L}{\sigma}\xi(z_{k},x)\Big\},\\
{\rm Set} & \hat{x}_{k+1}:=(1-\tau_k)\hat{x}_k + \tau_k \hat{z}_{k+1},\\
{\rm Compute}& z_{k+1} := \argmin_{x \in Q}\left\{\sum_{i=0}^{k+1} \lambda_i[f(x_i) + \fv{\nabla{f}(x_i)}{x-x_i}] + \frac{L}{\sigma}d(x)\right\}.
\end{array}
\end{equation}

In comparison, the Tseng's second and third Accelerated Proximal Gradient (APG) methods \cite{Tseng10}, which are particular cases of algorithms 1 and 3 in \cite{Tseng08}, 
only require the computation of either $\hat{z}_k$ or $z_k$ of the Nesterov's method, respectively.\\
{\bf Tseng's second APG method:}
Set $\lambda_0 := 1,~ \lambda_{k+1}:=\frac{1+\sqrt{1+4\lambda_k^2}}{2}$ for $k\geq 0$, and $x_0 := z_{-1} := \argmin_{x \in Q}d(x)$. Compute the solution $z_0$ of (a) with $k=0$ and set $\hat{x}_0:=z_0$. For $k \geq 0$, iterate the following procedure:
\begin{equation}
\label{Tseng-MD}
\begin{array}{ll}
{\rm Set} & x_{k+1}:=(1-\tau_k)\hat{x}_k + \tau_k z_k,\quad {\rm where~~}\tau_k:=\frac{\lambda_{k+1}}{\sum_{i=0}^{k+1}\lambda_i},\\
{\rm Compute}& z_{k+1} := \argmin_{x \in Q}\Big\{\lambda_{k+1}[f(x_{k+1})+\fv{\nabla{f}(x_{k+1})}{x-x_{k+1}}] + \frac{L}{\sigma}\xi(z_{k},x)\Big\},\\
{\rm Set} & \hat{x}_{k+1}:=(1-\tau_k)\hat{x}_k + \tau_k z_{k+1}.
\end{array}
\end{equation}
{\bf Tseng's third APG method:}
Set $\lambda_0 := 1,~ \lambda_{k+1}:=\frac{1+\sqrt{1+4\lambda_k^2}}{2}$ for $k\geq 0$, and $x_0 := z_{-1} := \argmin_{x \in Q}d(x)$. Compute the solution $z_0$ of (b) with $k=0$ and set $\hat{x}_0:=z_0$. For $k \geq 0$, iterate the following procedure:
\begin{equation}
\label{Tseng-DA}
\begin{array}{ll}
{\rm Set} & x_{k+1}:=(1-\tau_k)\hat{x}_k + \tau_k z_k,\quad {\rm where~~}\tau_k:=\frac{\lambda_{k+1}}{\sum_{i=0}^{k+1}\lambda_i},\\
{\rm Compute}& z_{k+1} := \argmin_{x \in Q}\left\{\sum_{i=0}^{k+1} \lambda_i[f(x_i) + \fv{\nabla{f}(x_i)}{x-x_i}] + \frac{L}{\sigma}d(x)\right\},\\
{\rm Set} & \hat{x}_{k+1}:=(1-\tau_k)\hat{x}_k + \tau_k z_{k+1}.
\end{array}
\end{equation}

\begin{remark}
To see the equivalence to the Tseng's second APG method, notice that $x_0$ is
not used at all in \cite{Tseng10}. Then defining $d(x):=D(x,z_0)=\eta(x)-\eta(z_0)-\langle\nabla\eta(z_0),x-z_0\rangle$ for an arbitrary $z_0\in Q$, we have $\sigma=1$ in (a). Finally, making the correspondence $z_k\rightarrow z_{k-1}$, $y_k\rightarrow x_k$, $x_k\rightarrow \hat{x}_k$, and $\theta_k\rightarrow \frac{1}{\lambda_k}$, it will result in our notation. For the Tseng's third APG method, identical observations are valid, excepting that we define $d(x):=\eta(x)-\eta(z_0)$ instead.
\end{remark}

In order to see a connection to the unifying framework of this paper, 
let us focus on the subproblems of the Nesterov's and the Tseng's methods.
These subproblems have the form $z_k:=\argmin_{x \in Q}\psi_k(x)$ with the auxiliary functions
\begin{equation}\label{MD-auxfunc-sm}
\psi_k(x) := \lambda_k l_f(x_k;x) + \frac{L}{\sigma}\xi(z_{k-1},x)
\end{equation}
for the Tseng's second APG method and
\begin{equation}\label{DA-auxfunc-sm}
 \psi_k(x) := \sum_{i=0}^k \lambda_i l_f(x_i;x) + \frac{L}{\sigma}d(x)
\end{equation}
for the Tseng's third APG method, where $l_f(y;x):=f(y)+\fv{\nabla{f}(y)}{x-y}$ (recall Example~\ref{ex-struc} (i)); the Nesterov's method can be seen as their hybrid.
Note that the auxiliary functions (\ref{MD-auxfunc-sm}) and (\ref{DA-auxfunc-sm}) correspond to the one of the MDM (\ref{MD-org-auxfunc}), excepting the factor $L/\sigma$, and the one of the DAM (\ref{DA-org-auxfunc}) with $\beta_k=L/\sigma$, respectively.

It can be shown that both
Nesterov's and Tseng's methods attain the optimal convergence rate;
the Nesterov's method (\ref{Nes-opt-alg}) and the Tseng's third APG method (\ref{Tseng-DA}) satisfy
\begin{equation*}
\forall k \geq 0,~ f(\hat{x}_k) - f(x^*) \leq \frac{4Ld(x^*)}{\sigma (k+1)(k+2)}
\end{equation*}
while the Tseng's second APG method (\ref{Tseng-MD}) satisfies
\begin{equation*}
\forall k \geq 0,~ f(\hat{x}_k) - f(x^*) \leq \frac{4L\xi(x_0,x^*)}{\sigma (k+2)^2}.
\end{equation*}
Note that these convergence rates ensures an $\varepsilon$-solution with the optimal iteration complexity $O(\sqrt{LR^2/\varepsilon})$ where $R=\sqrt{\frac{1}{\sigma}d(x^*)}$ for the first estimate and $R=\sqrt{\frac{1}{\sigma}\xi(x_0,x^*)}$ for the second one, respectively.

The convergence analysis of these three methods are performed in distinct ways.
What we propose in Section~\ref{sec-struc} is a universal analysis for them using the unifying framework defined in Section~\ref{sec-framework}.

The above gradient-based methods for smooth problems can be generalized to a wider class of convex problems.
The Nesterov's method (\ref{Nes-opt-alg}) was generalized for the composite structure \cite{Nes13} and for the inexact oracle model \cite{DGN14}.
The Tseng's methods were originally proposed for the composite objective function unifying some existing methods \cite{AT06,BT09,Nes05s}, while we only have described the particular ones for the smooth case.

It is important to note that the inexact oracle model \cite{DGN14} is also applicable to non-smooth problems yielding optimal subgradient methods; more precisely, it is applicable to `weakly smooth' convex problems (see Remark~\ref{extended-model}).

There are several {\it universal} (sub)gradient methods \cite{DGNs,DGN14,GL12,GL13,Lan12,Nes15} which are optimal for both non-smooth and smooth problems (and further generalized ones). In contrast to such universal methods, we will propose different (not universal) (sub)gradient-based methods for non-smooth and smooth problems which also include some of previously mentioned methods. A key contribution of our approach is that it provides a unified methodology on the analysis of optimal subgradient/gradient-based methods for non-smooth/smooth problems.


\section{Construction of auxiliary functions in the unifying framework}
\label{sec-framework}

For all methods we reviewed for the non-smooth or the smooth
problems, we need to form one or two {\it 
auxiliary functions} $\psi_k(x)$ and solve the corresponding subproblem(s)
$\min_{x\in Q}\psi_k(x)$ at each iteration.
In this section, we will propose general conditions which these
auxiliary functions should satisfy (Property~\ref{framework}) in order to provide a unifying analysis for them.
In particular, we will see that these auxiliary functions can be
derived from the extended MD model (\ref{auxfunc-eMD}),
the DA model (\ref{auxfunc-DA}), or a mixture of them.
Based on these results, we will propose a family of methods in the unifying framework
for the non-smooth problems in Section~\ref{sec-nonsm} and for
the structured problems in Section~\ref{sec-struc}.

The arguments of this section can be applied to both non-smooth and structured problems.

For a point $y \in Q$, denote by $l_f(y;\cdot): E \rightarrow \Real\cup\{+\infty\}$ a proper lower semicontinuous convex function with $f(x) \geq l_f(y;x),~ \forall x \in Q$, \ie, a lower convex approximation of $f(x)$ on $Q$ at $y \in Q$. We do not require any other assumption on $l_f(y;x)$ in this section but, in Sections~\ref{sec-nonsm} and~\ref{sec-struc}, we further require the assumptions (\ref{l_f-nonsm}) for the non-smooth problems and (\ref{l_f-struc}) for the structured problems, respectively (see also Example \ref{ex-struc} for more specific forms of $l_f(y;x)$).

For the prox-function $d(x)$, we denote $l_d(y;x) := d(y)+\fv{\nabla{d}(y)}{x-y}$.
Note that $d(x) \geq l_d(y;x)$ and $\xi(y,x) = d(x) - l_d(y;x)$ for any $x,y \in Q$.

We introduce the following two kinds of ``parameters'' which will be used in our methods.
Later on, they will be tuned to obtain an appropriate convergence rate for the methods.
\begin{itemize}
\item[-] The {\it weight parameter} $\{\lambda_k\}_{k \geq 0}$. We assume that $\lambda_k > 0$ for all $k \geq 0$
\item[-] The {\it scaling parameter} $\{\beta_k\}_{k \geq -1}$. We assume that $\beta_{k} \geq \beta_{k-1} > 0$ for all $k \geq 0$.
\end{itemize}
Note that the sequence of scaling parameters $\{\beta_k\}_{k \geq -1}$ is assumed to be non-decreasing throughout this paper.
For the weight parameter $\{\lambda_k\}_{k \geq 0}$, we define $S_k := \sum_{i=0}^k \lambda_i$.

We use $\{\hat{x}_k\}_{k \geq 0}\subset Q$ and $\{x_k\}_{k \geq 0} \subset Q$ for sequences of {\it approximate solutions} and {\it test points} (for which we compute the (sub)gradients), respectively. (Recall that $x_0 := \argmin_{x \in Q}d(x)$.)

Finally, we consider auxiliary functions $\psi_k(x)$ whose unique minimizers on $Q$ are denoted by $z_k := \argmin_{x \in Q}\psi_k(x)$.
The function $\psi_k(x)$ is assumed to be determined by $\{\lambda_i\}_{i=0}^k,~\{\beta_i\}_{i=-1}^k ,~\{x_i\}_{i=0}^k$, and $\{z_i\}_{i=0}^{k-1}$ for each $k \geq 0$.
We also consider $\psi_{-1}(x)$ (and $z_{-1}:=\argmin_{x \in Q}\psi_{-1}(x)$) for convenience.

The following property will be the fundamental one for the construction of auxiliary functions $\{\psi_k(x)\}_{k \geq -1}$ in our unifying framework.


\begin{property}
\label{framework}
Let $\{\lambda_k\}_{k \geq 0}$ be a sequence of weight parameters, $\{\beta_k\}_{k \geq -1}$ be a sequence of scaling parameters, $\{x_k\}_{k \geq 0}$ be a sequence of test points, and $l_f(y;x)$ be a lower convex approximation of $f(x)$.
Let $\psi_k(x)$ be auxiliary functions which are determined by $\{\lambda_i\}_{i=0}^k,~\{\beta_i\}_{i=-1}^k ,~\{x_i\}_{i=0}^k$, and $\{z_i\}_{i=0}^{k-1}$ where 
$z_i:=\argmin_{x \in Q}\psi_i(x)$ for each $k \geq -1$.
Then the following conditions hold:
\begin{itemize}
\item[(i)] $\min_{x \in Q} \psi_{-1}(x) = 0$ and $z_{-1} = x_0$.
\item[(ii)] The following inequality holds for every $k \geq -1$ :

\begin{equation*}
\forall x \in Q,~\psi_{k+1}(x) \geq \min_{z \in Q}\psi_k(z) + \lambda_{k+1} l_f(x_{k+1};x) + \beta_{k+1} d(x) - \beta_{k}l_d(z_k;x).
\end{equation*}
\item[(iii)] The following inequality holds for every $k \geq 0$ :
\begin{equation}\label{cond3}
\min_{x \in Q}\psi_k(x) \leq \min_{x \in Q}\left\{ \sum_{i=0}^k \lambda_i l_f(x_i;x) + \beta_k l_d(z_k;x) \right\}.
\end{equation}
\end{itemize}
\end{property}

Now, let us see that the auxiliary functions $\{\psi_k(x)\}$ of existing methods shown in Sections~\ref{ssec-opt-algs-nonsm} and~\ref{ssec-opt-algs-sm} can be unified via Property~\ref{framework}.
We propose the following concrete construction of auxiliary functions which will satisfy Property~\ref{framework}:

\begin{equation}\label{auxfunc}
\left.\begin{array}{l}
\textrm{(0) Define } \psi_{-1}(x) := \beta_{-1}d(x).\\
\textrm{(1) For each } k \geq -1, \textrm{ define } \psi_{k+1}(x) \textrm{ by either the {\it extended Mirror-Descent (MD)}} \\ \quad~~\textrm{{\it model} (\ref{auxfunc-eMD}) or the {\it Dual-Averaging (DA) model} (\ref{auxfunc-DA}).}
\end{array}\right\}
\end{equation}
{\bf Extended MD model:}
\begin{equation}\label{auxfunc-eMD}
\psi_{k+1}(x) := \min_{z \in Q}\psi_k(z) + \lambda_{k+1}l_f(x_{k+1};x) + \beta_{k+1} d(x) - \beta_{k}l_d(z_k;x).
\end{equation}
{\bf DA model:}
\begin{equation}\label{auxfunc-DA}
\psi_{k+1}(x) := \psi_k(x) + \lambda_{k+1}l_f(x_{k+1};x) + \beta_{k+1}d(x) - \beta_{k}d(x).
\end{equation}

In both cases, $\psi_{k+1}(x)$ is a proper lower semicontinuous and strongly convex function on $Q$.


\begin{remark}\label{remark-auxfunc}
The construction (\ref{auxfunc}) includes the following particular ones.
\begin{itemize}

\item %
Constructing $\{\psi_k(x)\}$ by (\ref{auxfunc}) with only the extended MD model updates (\ref{auxfunc-eMD}) yields
\begin{equation}
\label{eMD-model}
\begin{array}{rcl}
\psi_k(x) &=& \min_{z \in Q}\psi_{k-1}(z) + \lambda_k l_f(x_k;x) + \beta_k d(x) - \beta_{k-1}l_d(z_{k-1};x),\\
z_k &=& \argmin_{x \in Q}\big\{ \lambda_{k} l_f(x_{k};x) + \beta_k d(x) - \beta_{k-1}l_d(z_{k-1};x) \big\}.
\end{array}
\end{equation}
Because $\xi(z_{k-1},x)=d(x)-l_d(z_{k-1};x)$, the definition of $\psi_k(x)$ coincides with, up to a constant addition, the one of
the MDM (\ref{MD-org-auxfunc}) when
$\beta_k = 1$ and $x_k = z_{k-1}$, and
with the one of the Tseng's second APG method (\ref{MD-auxfunc-sm}) for
$\beta_k = L/\sigma$.

\item %
Constructing $\{\psi_k(x)\}$ by (\ref{auxfunc}) with only the DA model updates (\ref{auxfunc-DA}) yields
\begin{equation}
\label{DA-model}
\begin{array}{rcl}
\psi_k(x) &=& \sum_{i=0}^k \lambda_i l_f(x_i;x) + \beta_k d(x),\\
z_k &=& \argmin_{x \in Q}\left\{ \sum_{i=0}^k \lambda_i l_f(x_i;x) + \beta_k d(x) \right\}
\end{array}
\end{equation}
which coincides with the one of the DAM (\ref{DA-org-auxfunc}) and with the one of the Tseng's third APG method (\ref{DA-auxfunc-sm}) with $\beta_k = L/\sigma$.
\end{itemize}
Notice that a pure extended MD model updates (\ref{eMD-model}) considers only the previous $l_f(x_k;x)$ while the DA model updates (\ref{DA-model}) accumulates all $l_f(x_i;x)$'s.
Moreover, we can mix the updates (\ref{eMD-model}) and (\ref{DA-model}) in any order which corresponds in selecting {\it some} of previous $l_f(x_i;x)$'s to define the subproblem.

Note that, for a fixed $\psi_k(x)$, the construction (\ref{auxfunc-eMD}) of $\psi_{k+1}(x)$ is the minimalist choice which satisfies Property~\ref{framework}; according to (ii), any auxiliary function $\psi_{k+1}(x)$ majorizes the one defined by (\ref{auxfunc-eMD}) on the set $Q$.
\end{remark}

To prove Property~\ref{framework} for the construction (\ref{auxfunc}) of auxiliary functions, the following lemma \cite[Property~2]{Tseng08} is useful.


\begin{lemma}
\label{prox-ope-lemma}
Let $h: E \rightarrow \Real\cup\{+\infty\}$ be a proper lower semicontinuous convex function with $Q \subset \dom{h}$ and $\beta$ be a positive number. Denote $\psi(x) = h(x) + \beta d(x)$. Then the minimization problem $\min_{x \in Q}\psi(x)$ has a unique solution $z^* \in Q$ and it satisfies
\begin{equation*}
\psi(x) \geq \psi(z^*) + \beta \xi(z^*, x), \quad \forall x \in Q.
\end{equation*}
\end{lemma}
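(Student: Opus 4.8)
The plan is to exploit strong convexity of $d$ together with the first-order optimality condition at the minimizer. First I would establish existence and uniqueness: since $h$ is proper, lsc, and convex with $Q\subset\dom h$, and $\beta d(x)$ is strongly convex on $Q$ with parameter $\beta\sigma>0$, the sum $\psi=h+\beta d$ is proper, lsc, and strongly convex on the closed convex set $Q$; hence $\min_{x\in Q}\psi(x)$ is attained at a unique point $z^*\in Q$. This is the standard argument (coercivity from strong convexity gives existence on the closed set; strict convexity gives uniqueness), so I would state it briefly rather than belabor it.

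Next, the core inequality. The key observation is that $\psi(x)-\beta\xi(z^*,x) = h(x) + \beta d(x) - \beta\big(d(x)-l_d(z^*;x)\big) = h(x) + \beta\, l_d(z^*;x)$, which is a \emph{convex} function of $x$ (it is $h$ plus an affine term), and $z^*$ is a minimizer of this function over $Q$. Indeed, by the optimality of $z^*$ for $\psi$ over the convex set $Q$, for every $x\in Q$ and $\tau\in(0,1]$ we have $\psi(z^*+\tau(x-z^*))\ge\psi(z^*)$; dividing by $\tau$ and letting $\tau\downarrow 0$ yields $\langle \nabla\psi(z^*), x-z^*\rangle\ge 0$ in the differentiable-$d$ part, more precisely there is a subgradient $g\in\partial h(z^*)$ with $\langle g+\beta\nabla d(z^*),\,x-z^*\rangle\ge 0$ for all $x\in Q$. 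Then for any $x\in Q$, convexity of $h$ gives $h(x)\ge h(z^*)+\langle g,x-z^*\rangle$, and adding $\beta\, l_d(z^*;x)=\beta d(z^*)+\beta\langle\nabla d(z^*),x-z^*\rangle$ to both sides yields
\[
h(x)+\beta\,l_d(z^*;x)\ \ge\ h(z^*)+\beta d(z^*) + \langle g+\beta\nabla d(z^*),\,x-z^*\rangle\ \ge\ h(z^*)+\beta d(z^*).
\]
Rewriting the left side as $\psi(x)-\beta\xi(z^*,x)$ and the right side as $\psi(z^*)$ gives exactly $\psi(x)\ge\psi(z^*)+\beta\xi(z^*,x)$.

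The main obstacle is the technical handling of the optimality condition when $h$ is only lsc convex (not differentiable) and $Q$ may be unbounded: one must legitimately extract a subgradient of $h$ at $z^*$ together with the normal-cone inclusion, i.e.\ $0\in\partial h(z^*)+\beta\nabla d(z^*)+N_Q(z^*)$. I would handle this by the directional-derivative argument sketched above (taking $\tau\downarrow0$ in $\psi(z^*+\tau(x-z^*))\ge\psi(z^*)$, which is valid since the segment lies in $Q$ by convexity), which directly yields the needed inequality $\langle g+\beta\nabla d(z^*),x-z^*\rangle\ge0$ for a suitable $g\in\partial h(z^*)$ without invoking heavier subdifferential calculus; alternatively one can cite the standard optimality condition for $\min_{x\in Q}\psi(x)$. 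Since this lemma is quoted from \cite[Property~2]{Tseng08}, a short self-contained argument along these lines suffices.
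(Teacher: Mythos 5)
Your proof is correct; the paper itself does not prove this lemma but simply quotes it from \cite[Property~2]{Tseng08}, and your argument is exactly the standard one behind that result: existence/uniqueness from strong convexity of $\beta d$ plus lower semicontinuity, and the identity $\psi(x)-\beta\xi(z^*,x)=h(x)+\beta l_d(z^*;x)$ combined with first-order optimality of $z^*$. One small imprecision: the $\tau\downarrow 0$ argument does not by itself produce a subgradient $g\in\partial h(z^*)$ (that would need a sum rule); but if you first bound $h(z^*+\tau(x-z^*))\le(1-\tau)h(z^*)+\tau h(x)$ by convexity and only pass to the limit in the differentiable term $d$, you obtain $h(x)-h(z^*)\ge-\beta\fv{\nabla d(z^*)}{x-z^*}$ directly, which is all the conclusion requires — so the gap is cosmetic, not substantive.
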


Now we prove the following result which plays a crucial role in the development of our methods.


\begin{proposition} \label{auxfunc-admit}
Any sequence of auxiliary functions $\{\psi_k(x)\}$ constructed by (\ref{auxfunc}) satisfies Property~\ref{framework}.
\end{proposition}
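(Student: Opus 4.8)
The plan is to reduce all three conditions of Property~\ref{framework} to a single structural invariant on the auxiliary functions, proved by induction on $k \geq -1$: namely, that $\psi_k(x) = h_k(x) + \beta_k d(x)$ for some proper lower semicontinuous convex $h_k$ with $Q \subset \dom{h_k}$, and moreover $h_k(x) \leq \sum_{i=0}^{k}\lambda_i l_f(x_i;x)$ for all $x \in Q$ (with the empty-sum convention $h_{-1}\equiv 0$ at $k=-1$). The base case is immediate from $\psi_{-1}(x) := \beta_{-1}d(x)$. The role of the first half of the invariant is that it makes Lemma~\ref{prox-ope-lemma} applicable to every $\psi_k$ (with $h=h_k$, $\beta=\beta_k$); consequently $z_k := \argmin_{x\in Q}\psi_k(x)$ is well defined, and using $\xi(z_k,x)=d(x)-l_d(z_k;x)$,
\begin{equation}\label{plan-key}
\min_{z \in Q}\psi_k(z) = \psi_k(z_k) \leq \psi_k(x) - \beta_k\xi(z_k,x) = h_k(x) + \beta_k l_d(z_k;x), \qquad \forall x \in Q.
\end{equation}

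Granting the invariant, the three conditions follow quickly. Condition (i) is just $\psi_{-1}(x)=\beta_{-1}d(x)$ together with $d(x_0)=\min_{x\in Q}d(x)=0$, giving $\min_{x\in Q}\psi_{-1}(x)=0$ and $z_{-1}=x_0$. For (ii), an extended-MD step satisfies it with equality by the very definition (\ref{auxfunc-eMD}); for a DA step, rewrite (\ref{plan-key}) as $\psi_k(x)-\beta_k d(x) \geq \min_{z\in Q}\psi_k(z) - \beta_k l_d(z_k;x)$ and add $\lambda_{k+1}l_f(x_{k+1};x)+\beta_{k+1}d(x)$ to both sides: the left side becomes the definition (\ref{auxfunc-DA}) of $\psi_{k+1}(x)$ and the right side becomes precisely the bound demanded in (ii). For (iii), combine (\ref{plan-key}) with the second half of the invariant, $h_k(x) \leq \sum_{i=0}^k \lambda_i l_f(x_i;x)$, to obtain $\min_{z\in Q}\psi_k(z) \leq \sum_{i=0}^k\lambda_i l_f(x_i;x) + \beta_k l_d(z_k;x)$ for every $x\in Q$, and take the minimum over $x\in Q$.

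It then remains only to propagate the invariant through one step. In a DA step, $h_{k+1}(x) = h_k(x) + \lambda_{k+1}l_f(x_{k+1};x)$, a sum of proper lsc convex functions with $Q$ in their domains, and the bound on $h_k$ together with $l_f(x_{k+1};\cdot)$ yields the bound on $h_{k+1}$. In an extended-MD step, $h_{k+1}(x) = \min_{z\in Q}\psi_k(z) + \lambda_{k+1}l_f(x_{k+1};x) - \beta_k l_d(z_k;x)$ is a finite constant plus a proper lsc convex function plus an affine function, hence proper lsc convex with $Q$ in its domain; and substituting (\ref{plan-key}) into this expression cancels the $\pm\beta_k l_d(z_k;x)$ terms and leaves $h_{k+1}(x) \leq h_k(x) + \lambda_{k+1}l_f(x_{k+1};x) \leq \sum_{i=0}^{k+1}\lambda_i l_f(x_i;x)$, as required.

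The main obstacle — in fact the only subtle point — is this extended-MD step: the update (\ref{auxfunc-eMD}) injects the constant $\min_{z\in Q}\psi_k(z)$, and to keep the invariant alive one must re-express that constant via (\ref{plan-key}) so that it cancels against the subtracted $\beta_k l_d(z_k;x)$. This is what forces the two halves of the invariant to be carried together: one needs the $h_k+\beta_k d$ form in order to invoke Lemma~\ref{prox-ope-lemma}, and one needs Lemma~\ref{prox-ope-lemma} to prevent the $h_k$-bound from degrading. The remaining ingredients — the identity $\xi(z,x)=d(x)-l_d(z;x)$ and the nonnegativity $l_d(x_0;\cdot)\geq 0$ on $Q$ (first-order optimality of $x_0$, which is itself the $k=-1$ instance of (\ref{plan-key}) applied to $\psi_{-1}$) — are routine bookkeeping.
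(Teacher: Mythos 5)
Your proposal is correct, and its treatment of conditions (i) and (ii) coincides with the paper's (extended-MD steps satisfy (ii) with equality; DA steps use Lemma~\ref{prox-ope-lemma} together with $\xi(z_k,x)=d(x)-l_d(z_k;x)$). Where you genuinely diverge is condition (iii): the paper proves (\ref{cond3}) by an induction over the index set $K$ of iterations at which the extended-MD update is used, writing out $\psi_k$ explicitly on each block between consecutive MD resets as $\min_{z\in Q}\psi_{k_i-1}(z)+\sum_{j=k_i}^{k}\lambda_j l_f(x_j;x)+\beta_k d(x)-\beta_{k_i-1}l_d(z_{k_i-1};x)$ and invoking the hypothesis at $k=k_i-1$ to cancel the $\beta_{k_i-1}l_d(z_{k_i-1};x)$ terms, with a separate treatment of $K=\emptyset$. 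You instead strengthen the inductive hypothesis to the pointwise invariant $\psi_k=h_k+\beta_k d$ with $h_k$ proper lsc convex and $h_k(x)\le\sum_{i=0}^k\lambda_i l_f(x_i;x)$ on $Q$, from which (iii) drops out in one line via your inequality (plan-key); the cancellation of $\pm\beta_k l_d(z_k;x)$ that the paper performs once per block you perform once per MD step, uniformly, so no case analysis over $K$ is needed. The paper's route has the side benefit of exhibiting the explicit form of $\psi_k$ between MD resets (a DA accumulation plus a constant), while your route is shorter, treats every step identically, and makes explicit the properness/lower semicontinuity/convexity of $h_k=\psi_k-\beta_k d$, which the paper only asserts can ``easily be checked by induction.'' One small remark: the nonnegativity $l_d(x_0;\cdot)\ge 0$ on $Q$ that you flag is not actually needed in your argument, since Property~\ref{framework} requires (iii) only for $k\ge 0$ and your base case is carried entirely by $h_{-1}\equiv 0$; it would only matter if, like the paper, you wanted (\ref{cond3}) also at $k=-1$.
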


\begin{proof}
Since $\min_{x \in Q}d(x) = d(x_0) = 0$, $\psi_{-1}(x) = \beta_{-1}d(x)$ 
satisfies the condition (i) with $z_{-1}=x_0$.

Let us prove the condition (ii) considering two cases for a fixed $k\geq -1$.
If $\psi_{k+1}(x)$ is updated by (\ref{auxfunc-eMD}), then the condition (ii) is satisfied with equality.

Next, consider the case of the update by (\ref{auxfunc-DA}).
Notice that on the construction (\ref{auxfunc}), we can easily check by induction that the functions $h_k(x) := \psi_k(x) - \beta_kd(x)$ are always proper lower semicontinuous and convex. Thus Lemma~\ref{prox-ope-lemma} implies that
\begin{equation}\label{auxfunc-sconv}
\psi_k(x) \geq \min_{z \in Q}\psi_k(z) + \beta_k \xi(z_k,x),\quad \forall x \in Q.
\end{equation}
Therefore, we obtain
\begin{eqnarray*}
\psi_{k+1}(x) &=& \psi_k(x) + \lambda_{k+1}l_f(x_{k+1};x) + \beta_{k+1} d(x) - \beta_k d(x)\\
&\stackrel{(\ref{auxfunc-sconv})}{\geq}& [\min_{z \in Q}\psi_k(z) + \beta_k\xi(z_k,x)] + \lambda_{k+1}l_f(x_{k+1};x) + \beta_{k+1}d(x) - \beta_kd(x)\\
&=& \min_{z \in Q}\psi_k(z) + \lambda_{k+1}l_f(x_{k+1};x) + \beta_{k+1}d(x) - \beta_kl_d(z_k;x)
\end{eqnarray*}
for all $x \in Q$ and (ii) is satisfied.

Let us finally prove the condition (iii), namely, prove the inequality (\ref{cond3}) for all $k \geq 0$.
We actually show that (\ref{cond3}) is also valid for all $k \geq -1$.
The case $k = -1$ is due to the optimality condition for $z_{-1} = \argmin_{x \in Q}\psi_{-1}(x) = \argmin_{x \in Q}\beta_{-1}d(x)$, that is, $\min_{x \in Q}\beta_{-1}d(x) = \min_{x \in Q}\beta_{-1}l_d(z_{-1};x)$ holds.

Next, we introduce the index set
\[K:=\{k~|~k\in\{-1,0,1,\ldots\},~ \psi_{k}(x)\text{ is updated by (\ref{auxfunc-eMD})}\}\]
and we divide to two cases: $K=\emptyset$ and $K\ne\emptyset$.

The case $K=\emptyset$, which corresponds to the construction (\ref{DA-model}), \ie, $\psi_k(x)=\sum_{i=0}^k \lambda_i l_f(x_i;x)+\beta_k d(x)$, is immediate to prove (iii) as follow:
\[
\min_{z \in Q}\psi_k(z)
\stackrel{(\ref{auxfunc-sconv})}{\leq} \psi_k(x) -\beta_k \xi(z_k,x)
= \sum_{i=0}^k \lambda_i l_f(x_i;x) + \beta_kl_d(z_k;x)
\]
for every $x \in Q$ and $k \geq 0$.

Suppose now that $\emptyset \ne K=\{k_1,k_2,\ldots\}$ where $k_1 < k_2<\ldots$.
When $K$ is finite, say $K=\{k_1,\ldots,k_m\}$, we define $k_{m+1}:=+\infty$.
Then, it suffices to prove the following fact by induction on $i=1,2,\ldots$~:
\[(P_i): \text{ the inequality (\ref{cond3}) holds for all } k \text{ with } -1 \leq k < k_i.\]

The proof of $(P_1)$ is as follows.
If $k_1=0$, then $(P_1)$ corresponds to the inequality (\ref{cond3}) for $k=-1$, which was just proved.
If $k_1>0$, then $\psi_0(x),\ldots,\psi_{k_1-1}(x)$ are constructed by only the DA model (\ref{auxfunc-DA}) from which $(P_1)$ follows as the same way as the case $K=\emptyset$. 

Assume that $(P_i)$ is true for some $i \geq 1$. By the definition of the set $K$, we know that $\psi_{k}(x)$ is constructed by (\ref{auxfunc-eMD}) for $k=k_i$ and by (\ref{auxfunc-DA}) for $k$ with $k_{i} < k < k_{i+1}$ (Recall that, when $K$ is finite and $i=|K|$, we have $k_{i+1}=+\infty$).
Therefore, $\psi_{k}(x)$ for $k_i \leq k < k_{i+1}$ is defined by
\begin{eqnarray*}
\psi_k(x) &=&
	\left[
	\min_{z \in Q}\psi_{k_i-1}(z) + \lambda_{k_i}l_f(x_{k_i};x)+\beta_{k_i}d(x)-\beta_{k_i-1}l_d(z_{k_i-1};x)
	\right]\\
&&
	+\sum_{j\,:\,k_i \leq j<k}[\lambda_{j+1}l_f(x_{j+1};x)+\beta_{j+1}d(x)-\beta_j d(x)]\\
&=&
	\min_{z \in Q}\psi_{k_i-1}(z) + \sum_{j=k_i}^k\lambda_jl_f(x_j;x)+\beta_{k}d(x)-\beta_{k_i-1}l_d(z_{k_i-1};x).
\end{eqnarray*}
Note that a summation over the empty set is defined to be zero.
Since (\ref{cond3}) holds for \mbox{$k=k_i-1\geq-1$} by the hypothesis $(P_i)$, we have the following for all $x \in Q$ and $k$ \mbox{with $k_i\leq k < k_{i+1}$:}
\begin{eqnarray*}
\min_{z \in Q}\psi_{k}(z)
&\stackrel{(\ref{auxfunc-sconv})}{\leq}& \psi_{k}(x) - \beta_{k}\xi(z_{k},x)\\
&=& \left[\min_{z \in Q}\psi_{k_i-1}(z) + \sum_{j=k_i}^k\lambda_j l_f(x_j;x) + \beta_{k}d(x) - \beta_{k_i-1}l_d(z_{k_i-1};x)\right] \\
&& - \beta_{k}\xi(z_{k},x) \\
&\stackrel{\rm (\ref{cond3})}{\leq}& \left[\sum_{i=0}^{k_i-1} \lambda_i l_f(x_i;x)+\beta_{k_i-1} l_d(z_{k_i-1};x)\right]\\ && \qquad + \sum_{j=k_i}^k\lambda_i l_f(x_i;x) + \beta_{k}d(x) - \beta_{k_i-1}l_d(z_{k_i-1};x)- \beta_{k}\xi(z_{k},x) \\
&=& \sum_{i=0}^{k} \lambda_i l_f(x_i;x)+\beta_{k} l_d(z_{k};x).
\end{eqnarray*}
Hence, the inequality (\ref{cond3}) holds for $k_i \leq k < k_{i+1}$ which shows $(P_{i+1})$ and therefore (iii).
\end{proof}


To conclude this section, we define the following relation based on the Nesterov's approach \cite{Nes05s}, see also \cite{NS15}. We propose (sub)gradient-based methods which generates approximate solutions $\{\hat{x}_k\} \subset Q$ satisfying the following relation for every $k \geq 0$:
\begin{equation}
\label{est-rel}
(R_k) \quad S_k f(\hat{x}_k) \leq \min_{x \in Q} \psi_k(x) + C_k
\end{equation}
where $C_k$ is defined according to the problem structure.

This relation yields the following lemma which provides a convergence rate for all methods.


\begin{lemma}
\label{gen-bound}
Let $\{\psi_k(x)\}$ be a sequence of auxiliary functions satisfying Property~\ref{framework} associated with weight parameters $\{\lambda_k\}_{k \geq 0}$, scaling parameters $\{\beta_k\}_{k \geq -1}$, test points $\{x_k\}_{k \geq 0}$, and a lower convex approximation $l_f(y;x)$ of $f(x)$.
If a sequence $\{\hat{x}_k\} \subset Q$ satisfies the relation $(R_k)$ for some $k \geq 0$, then we have
\begin{equation*}
f(\hat{x}_k) - f(x^*) \leq \frac{\beta_k l_d(z_k;x^*) + C_k}{S_k}
\end{equation*}
where $z_k:=\argmin_{x \in Q}\psi_k(x)$.
\end{lemma}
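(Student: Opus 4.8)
The plan is to chain together condition (iii) of Property~\ref{framework}, the defining inequality $l_f(y;x)\le f(x)$ of a lower convex approximation, and the hypothesis $(R_k)$, and then divide by $S_k>0$. Conditions (i) and (ii) of Property~\ref{framework} will not be needed; only (iii) enters.

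First I would exploit that (\ref{cond3}) asserts an inequality between minima, so I may upper-bound its right-hand side by evaluating at the single feasible point $x=x^*\in Q$ (which exists and lies in $Q$ by the standing assumption that (\ref{primal-problem}) attains its optimum). This gives $\min_{x\in Q}\psi_k(x)\le \sum_{i=0}^k \lambda_i\, l_f(x_i;x^*)+\beta_k\, l_d(z_k;x^*)$. Next, since each $l_f(x_i;\cdot)$ is a lower convex approximation of $f$ on $Q$ and $x^*\in Q$, we have $l_f(x_i;x^*)\le f(x^*)$ for every $i$; as $\lambda_i>0$ and $\sum_{i=0}^k\lambda_i=S_k$, summing yields $\sum_{i=0}^k \lambda_i\, l_f(x_i;x^*)\le S_k f(x^*)$. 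Combining the two estimates produces $\min_{x\in Q}\psi_k(x)\le S_k f(x^*)+\beta_k\, l_d(z_k;x^*)$.

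Finally I would substitute this bound into the relation $(R_k)$, namely $S_k f(\hat{x}_k)\le \min_{x\in Q}\psi_k(x)+C_k$, obtaining $S_k f(\hat{x}_k)\le S_k f(x^*)+\beta_k\, l_d(z_k;x^*)+C_k$, and divide by $S_k=\sum_{i=0}^k\lambda_i>0$ to reach the claimed bound $f(\hat{x}_k)-f(x^*)\le \big(\beta_k\, l_d(z_k;x^*)+C_k\big)/S_k$. There is no genuine obstacle here: the argument is a two-line estimate, and the only point deserving a moment's care is that (iii) compares the minimum of $\psi_k$ with the minimum of another function, so one must remember to instantiate the latter minimization at the feasible point $x^*$ rather than trying to align minimizers.
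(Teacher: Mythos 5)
Your proposal is correct and follows essentially the same argument as the paper: condition (iii) of Property~\ref{framework}, the bound $\sum_i \lambda_i l_f(x_i;x)\le S_k f(x)$ instantiated at $x^*$, and then the relation $(R_k)$, divided by $S_k>0$. The only cosmetic difference is that the paper bounds $l_f$ by $f$ inside the minimization before evaluating at $x^*$, whereas you evaluate at $x^*$ first; the estimate is the same.
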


\begin{proof}
Since $\sum_{i=0}^k \lambda_il_f(x_i;x) \leq S_k f(x)$ for all $x \in Q$, 
using the condition (iii) of Property~\ref{framework} yields
\[ \min_{x \in Q}\psi_k(x) \leq \min_{x \in Q} \left\{ \sum_{i=0}^k \lambda_i l_f(x_i;x) + \beta_k l_d(z_k;x) \right\} \leq \min_{x \in Q} \left\{ S_kf(x) + \beta_k l_d(z_k;x) \right\} \leq S_kf(x^*) + \beta_k l_d(z_k;x^*). \]
Therefore, the relation $(R_k)$ implies
\begin{equation*}
S_k f(\hat{x}_k) \leq \min_{x \in Q}\psi_k(x) + C_k \leq S_k f(x^*) + \beta_kl_d(z_k;x^*)+C_k.
\end{equation*}
\end{proof}

All the proposed methods are constructed so that they satisfy the relation $(R_k)$ for some $C_k$ (then we can obtain a convergence result from Lemma~\ref{gen-bound}) when the auxiliary function also satisfies Property~\ref{framework}.
Although the relation $(R_k)$ and its variants take a key role in the proofs on the convergence estimates for these methods, we are not certain if they are essential. That is, it might be possible to prove the results without using the relation $(R_k)$. The role of the $(R_k$) as a proving technique is more apparent in the paper \cite{Ito15}.


\section{A family of subgradient-based methods in the unifying framework}
\label{sec-nonsm}

We now focus, in this section, on the non-smooth problems introduced in Section~\ref{ssec-setting} and we establish a novel family of subgradient-based methods for them.
In Section~\ref{ssec-alg-nonsm}, we propose the general subgradient-based method (Methods~\ref{gen-alg-nonsm} (a) and (b)). Then, we analyze the convergence of them in Section~\ref{ssec-conv-nonsm} and finally we do a comparison with existing methods in Section~\ref{ssec-particularize-nonsm}.

Throughout this section, we assume that the problem (\ref{primal-problem}) belongs to the class of non-smooth problems, namely, we have available a subgradient mapping $g(y) \in \partial{f}(y)$ for $y \in Q$
and the function $l_f(y;x)$ defined by (\ref{l_f-nonsm}), and that the subproblem (\ref{subprob-nonsm}) is efficiently solvable (see Section~\ref{ssec-setting}).
Remark that for the non-smooth problems, the subproblems $z_k = \argmin_{x \in Q}\psi_k(x)$ constructed from (\ref{auxfunc}) are of the form (\ref{subprob-nonsm}).


\subsection{The general subgradient-based methods in the unifying framework}
\label{ssec-alg-nonsm}

Here, we develop update formulas for the test points $\{x_k\}$ and the approximate solutions $\{\hat{x}_k\}$ so that they satisfy the relation $(R_k)$ with
\begin{equation}
\label{C_k-nonsm}
C_k = \frac{1}{2 \sigma}\sum_{i=0}^k \frac{\lambda_i^2}{\beta_{i-1}}\|g_i\|_*^2,
\end{equation}
where $g_k:=g(x_k) \in \partial{f}(x_k)$.
We also use the following alternative relation:
\begin{equation}\label{hat-R-nonsm}
(\hat{R}_k) ~~ \sum_{i=0}^k \lambda_i f(x_i) \leq \min_{x \in Q}\psi_k(x) + C_k.
\end{equation}
Note that the relation $(\hat{R}_k)$ provides an alternative
to Lemma~\ref{gen-bound} which can be
proven in a similar way: if $\{\psi_k(x)\}$ admits Property~\ref{framework} and the relation $(\hat{R}_k)$ is satisfied for some $k \geq 0$, then we have
\begin{equation}
\label{gen-bound-alt-nonsm}
\frac{1}{S_k}\sum_{i=0}^k \lambda_i f(x_i) - f(x^*) \leq \frac{\beta_k l_d(z_k;x^*) + C_k}{S_k}.
\end{equation}


We use the following lemma for our analysis.
Recall that $\sigma>0$ is the convexity parameter of the prox-function $d(x)$.

\begin{lemma}
\label{tool-nonsm}
Let $\{x_k\}_{k \geq 0} \subset Q$ and $g_k \in \partial{f}(x_k),~ k \geq 0$.
Then, for $\lambda \in \Real,~ \beta > 0$ and $x, z \in Q$, we have
\begin{equation*}
\fv{\lambda g_k}{x-z} + \beta\xi(z,x) + \frac{1}{2 \sigma \beta}\norm{\lambda g_k}_*^2  \geq 0, \quad\forall k \geq 0,
\end{equation*}
and, in particular,
\begin{equation*}
\lambda l_f(x_k;x) + \beta \xi(x_k,x) + \frac{\lambda^2}{2 \sigma \beta}\norm{g_k}_*^2 \geq \lambda f(x_k), \quad \forall k \geq 0.
\end{equation*}
\end{lemma}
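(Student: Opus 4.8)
The plan is to prove the first (more general) inequality directly from the definition of the dual norm and the strong convexity of $d$, and then obtain the second inequality as a specialization by choosing $z = x_k$ and invoking the subgradient inequality for $g_k$. For the first inequality, I would start from the elementary bound
\[
\fv{\lambda g_k}{x-z} \geq -\norm{\lambda g_k}_* \cdot \norm{x-z},
\]
which follows immediately from the definition $\norm{s}_* = \max_{\norm{v}\leq 1}\fv{s}{v}$ (applied to $s = -\lambda g_k$ and $v = (x-z)/\norm{x-z}$, the case $x=z$ being trivial). Combining this with $\beta\xi(z,x) \geq \frac{\sigma\beta}{2}\norm{x-z}^2$, it remains to show
\[
\frac{\sigma\beta}{2}\norm{x-z}^2 - \norm{\lambda g_k}_*\,\norm{x-z} + \frac{1}{2\sigma\beta}\norm{\lambda g_k}_*^2 \geq 0.
\]
But the left-hand side is exactly $\tfrac12\big(\sqrt{\sigma\beta}\,\norm{x-z} - \tfrac{1}{\sqrt{\sigma\beta}}\norm{\lambda g_k}_*\big)^2 \geq 0$, a perfect square; this settles the first claim.

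For the second inequality, I would set $z = x_k$ in the first inequality, giving
\[
\fv{\lambda g_k}{x - x_k} + \beta\xi(x_k,x) + \frac{\lambda^2}{2\sigma\beta}\norm{g_k}_*^2 \geq 0.
\]
Now I use $l_f(x_k;x) = f(x_k) + \fv{g_k}{x - x_k}$ from the definition (\ref{l_f-nonsm}), so that $\lambda l_f(x_k;x) = \lambda f(x_k) + \fv{\lambda g_k}{x-x_k}$. Substituting the displayed inequality yields
\[
\lambda l_f(x_k;x) + \beta\xi(x_k,x) + \frac{\lambda^2}{2\sigma\beta}\norm{g_k}_*^2 \geq \lambda f(x_k),
\]
which is the second claim. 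Note that for the second inequality I only rearranged terms — no sign restriction on $\lambda$ is needed there either, since $\fv{\lambda g_k}{x-x_k}$ is handled as a single linear term — so the hypothesis $\lambda\in\Real$ (rather than $\lambda>0$) is respected throughout.

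There is essentially no serious obstacle here: the only point requiring minor care is the degenerate case $x = z$ (resp. $x = x_k$), where $\norm{x-z}=0$ and the Cauchy–Schwarz-type bound must be read as $0 \geq 0$; everything else is then trivially nonnegative. The slightly subtle conceptual step is recognizing that the three-term expression collapses to a perfect square after bounding the inner product and the Bregman distance from below — this is the Young-type inequality $ab \leq \frac{a^2}{2} + \frac{b^2}{2}$ in disguise, applied with $a = \sqrt{\sigma\beta}\,\norm{x-z}$ and $b = \norm{\lambda g_k}_*/\sqrt{\sigma\beta}$. I would present the chain of inequalities inline rather than in a displayed \texttt{align} block to keep the argument compact.
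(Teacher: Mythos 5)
Your proof is correct and follows essentially the same route as the paper: the paper invokes the inequality $\tfrac12\norm{x}^2+\tfrac12\norm{s}_*^2\geq\fv{s}{x}$ together with $\xi(z,x)\geq\tfrac{\sigma}{2}\norm{x-z}^2$, which is exactly your Cauchy--Schwarz-plus-perfect-square argument written in one step, and the second claim is obtained identically by setting $z=x_k$ and adding $\lambda f(x_k)$. No gaps.
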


\begin{proof}
Since for every $x \in E$ and $s \in E^*$ the inequality $\frac{1}{2}\|x\|^2 + \frac{1}{2}\|s\|_*^2 \geq \fv{s}{x}$ holds, we have
\begin{equation*}
\fv{\lambda g_k}{x-z} + \beta\xi(z,x) + \frac{1}{2 \sigma \beta}\norm{\lambda g_k}_*^2
\geq
\fv{\lambda g_k}{x-z} + \frac{\sigma\beta}{2}\norm{x-z}^2 + \frac{1}{2 \sigma \beta}\norm{\lambda g_k}_*^2 \geq 0.
\end{equation*}
Substituting $z=x_k$ for this inequality and adding $\lambda f(x_k)$ to  both sides, we obtain the second assertion.
\end{proof}

Now, let us show the following key result which will
provide efficient subgradient-based methods in a 
straightforward way.


\begin{theorem}
\label{estrel-anal-nonsm}
Let $\{\psi_k(x)\}$ be a sequence of auxiliary functions satisfying Property~\ref{framework} associated with weight parameters $\{\lambda_k\}_{k \geq 0}$, scaling parameters $\{\beta_k\}_{k \geq -1}$, test points $\{x_k\}_{k \geq 0}$, and the lower convex approximation $l_f(y;x)$ of $f(x)$ defined by (\ref{l_f-nonsm}).
Denote $z_k = \argmin_{x \in Q}\psi_k(x)$ and define $C_k$ by (\ref{C_k-nonsm}).
Then the following assertions hold.
\begin{itemize}
\item[(a)] The relations $(R_0)$ and $(\hat{R}_0)$ are satisfied 
by setting $\hat{x}_0 := x_0$.
\item[(b)] Suppose that the relation $(R_k)$ is satisfied for some integer $k \geq 0$.
If the relation $x_{k+1}=z_k$ holds, then the relation $(R_{k+1})$ is satisfied by setting
\[
\hat{x}_{k+1} := \ds\frac{S_k \hat{x}_k + \lambda_{k+1}x_{k+1}}{S_{k+1}}.
\]
Moreover, if the relations $(\hat{R}_k)$ 
is satisfied for some $k\geq 0$
and $x_{k+1} = z_k$ holds, then $(\hat{R}_{k+1})$ is satisfied.
\item[(b')] Suppose that the relation $(R_k)$ is satisfied for some integer $k \geq 0$.
If the relation
\[
x_{k+1} = \frac{S_k \hat{x}_k + \lambda_{k+1}z_{k}}{S_{k+1}}
\]
holds, then the relation $(R_{k+1})$ is satisfied by setting $\hat{x}_{k+1}:=x_{k+1}$.
\end{itemize}
\end{theorem}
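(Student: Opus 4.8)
The plan is to verify the three assertions by unrolling the definitions of the relations $(R_k)$, $(\hat R_k)$ and the specific value $C_k = \tfrac{1}{2\sigma}\sum_{i=0}^k \tfrac{\lambda_i^2}{\beta_{i-1}}\|g_i\|_*^2$, and feeding into them the structural inequalities of Property~\ref{framework}(ii)--(iii) together with Lemma~\ref{tool-nonsm}. For part (a), with $\hat x_0 := x_0$ we have $S_0 f(\hat x_0) = \lambda_0 f(x_0)$, so $(R_0)$ and $(\hat R_0)$ coincide and both reduce to $\lambda_0 f(x_0) \le \min_{x\in Q}\psi_0(x) + \tfrac{\lambda_0^2}{2\sigma\beta_{-1}}\|g_0\|_*^2$. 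First I would apply Property~\ref{framework}(ii) with $k=-1$ (using $z_{-1}=x_0$, $\min_{z}\psi_{-1}(z)=0$ from (i)), which gives $\psi_0(x) \ge \lambda_0 l_f(x_0;x) + \beta_0 d(x) - \beta_{-1} l_d(x_0;x) = \lambda_0 l_f(x_0;x) + \beta_{-1}\xi(x_0,x) + (\beta_0-\beta_{-1})d(x) \ge \lambda_0 l_f(x_0;x) + \beta_{-1}\xi(x_0,x)$, using $d\ge 0$ and $\beta_0\ge\beta_{-1}$. Minimizing over $x\in Q$ and invoking the second inequality of Lemma~\ref{tool-nonsm} with $\lambda=\lambda_0$, $\beta=\beta_{-1}$, $k=0$ finishes (a).

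For part (b), assume $(R_k)$ and $x_{k+1}=z_k$. The key computation is to bound $S_{k+1} f(\hat x_{k+1})$. By convexity of $f$ and the update $\hat x_{k+1} = (S_k\hat x_k + \lambda_{k+1}x_{k+1})/S_{k+1}$, we get $S_{k+1} f(\hat x_{k+1}) \le S_k f(\hat x_k) + \lambda_{k+1} f(x_{k+1})$. I would then use $(R_k)$ to replace $S_k f(\hat x_k)$ by $\min_x \psi_k(x) + C_k$, and apply Property~\ref{framework}(ii) (the version with $x=x_{k+1}=z_k$, noting $d(x_{k+1}) - l_d(z_k;x_{k+1}) = \xi(z_k,x_{k+1}) = \xi(z_k,z_k) = 0$) to obtain $\psi_{k+1}(x_{k+1}) \ge \min_z\psi_k(z) + \lambda_{k+1} l_f(x_{k+1};x_{k+1}) + (\beta_{k+1}-\beta_k)d(x_{k+1}) \ge \min_z\psi_k(z) + \lambda_{k+1} f(x_{k+1})$; combined with $\min_x\psi_{k+1}(x)$ being attained at $z_{k+1}$ and Lemma~\ref{prox-ope-lemma} (or more simply using that $\min_x\psi_{k+1}(x)\le\psi_{k+1}(x_{k+1})$) this lets me estimate $\lambda_{k+1}f(x_{k+1})\le \psi_{k+1}(x_{k+1}) - \min_z\psi_k(z) \le \min_x\psi_{k+1}(x) + \lambda_{k+1}f(x_{k+1}) - \min_z \psi_k(z)$ — wait, this needs care; the cleaner route is to directly add the $(R_k)$ inequality to the Property~\ref{framework}(ii) inequality evaluated generically. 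Concretely: $S_{k+1}f(\hat x_{k+1}) \le [\min_x\psi_k(x) + C_k] + \lambda_{k+1}f(x_{k+1})$, and then I want $\min_x\psi_k(x) + \lambda_{k+1}f(x_{k+1}) \le \min_x\psi_{k+1}(x) + \tfrac{\lambda_{k+1}^2}{2\sigma\beta_k}\|g_{k+1}\|_*^2$, so that $C_{k+1} = C_k + \tfrac{\lambda_{k+1}^2}{2\sigma\beta_k}\|g_{k+1}\|_*^2$ closes the induction. This last inequality follows by taking any $x$, writing $\psi_{k+1}(x)\ge \min_z\psi_k(z) + \lambda_{k+1}l_f(x_{k+1};x) + \beta_{k+1}d(x) - \beta_k l_d(z_k;x)$ from Property~\ref{framework}(ii), bounding $\beta_{k+1}d(x)\ge\beta_k d(x)$ so that $\beta_k d(x) - \beta_k l_d(z_k;x) = \beta_k\xi(z_k,x)$, and then applying the first inequality of Lemma~\ref{tool-nonsm} with $\lambda=\lambda_{k+1}$, $\beta=\beta_k$, $z=z_k$, $x=x$, which gives $\lambda_{k+1}l_f(x_{k+1};x) + \beta_k\xi(z_k,x) \ge \lambda_{k+1}f(x_{k+1}) - \tfrac{\lambda_{k+1}^2}{2\sigma\beta_k}\|g_{k+1}\|_*^2$ after adding $\lambda_{k+1}f(x_{k+1})$ to both sides (here I use $l_f(x_{k+1};x) = f(x_{k+1}) + \langle g_{k+1}, x - x_{k+1}\rangle$ and $\langle \lambda_{k+1}g_{k+1}, x - z_k\rangle$ structure). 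Minimizing over $x$ yields the claim. The statement for $(\hat R_k)\Rightarrow(\hat R_{k+1})$ is even simpler: just add $\lambda_{k+1}f(x_{k+1})$ to both sides of $(\hat R_k)$ and apply the same estimate $\min_x\psi_k(x) + \lambda_{k+1}f(x_{k+1}) \le \min_x\psi_{k+1}(x) + \tfrac{\lambda_{k+1}^2}{2\sigma\beta_k}\|g_{k+1}\|_*^2$.

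For part (b'), assume $(R_k)$ and $x_{k+1} = (S_k\hat x_k + \lambda_{k+1}z_k)/S_{k+1}$, and set $\hat x_{k+1}:=x_{k+1}$. Now convexity of $f$ gives $S_{k+1}f(\hat x_{k+1}) = S_{k+1}f(x_{k+1}) \le S_k f(\hat x_k) + \lambda_{k+1}f(z_k)$, but I cannot directly bound $f(z_k)$. Instead I would use the subgradient inequality at $x_{k+1}$: $f(\hat x_k) \ge l_f(x_{k+1};\hat x_k)$ and $f(z_k) \ge l_f(x_{k+1};z_k)$, so $S_{k+1}f(x_{k+1}) = S_{k+1}l_f(x_{k+1};x_{k+1})$ (since $l_f(x_{k+1};\cdot)$ is affine and $x_{k+1}$ is the stated convex combination) $= S_k l_f(x_{k+1};\hat x_k) + \lambda_{k+1}l_f(x_{k+1};z_k) \le S_k f(\hat x_k) + \lambda_{k+1}l_f(x_{k+1};z_k)$. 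Then I apply $(R_k)$ to $S_k f(\hat x_k)$ and bound $\min_x\psi_k(x) \le \psi_k(z_k) - \beta_k\xi(z_k,z_k) = \min_x\psi_k(x)$ — rather, I use Property~\ref{framework}(ii) evaluated at $x = z_k$: from (ii), $\psi_{k+1}(z_k) \ge \min_z\psi_k(z) + \lambda_{k+1}l_f(x_{k+1};z_k) + \beta_{k+1}d(z_k) - \beta_k l_d(z_k;z_k)$, and $\beta_{k+1}d(z_k) - \beta_k d(z_k) \ge 0$ while $d(z_k) - l_d(z_k;z_k) = 0$, so $\psi_{k+1}(z_k) \ge \min_z\psi_k(z) + \lambda_{k+1}l_f(x_{k+1};z_k)$, hence $\lambda_{k+1}l_f(x_{k+1};z_k) \le \psi_{k+1}(z_k) - \min_z\psi_k(z)$. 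Since $\psi_{k+1}$ need not attain its minimum at $z_k$ I would instead argue: $\min_x\psi_k(x) + \lambda_{k+1}l_f(x_{k+1};z_k) \le \psi_{k+1}(z_k) \le \min_x\psi_{k+1}(x) + \text{(something nonnegative)}$ — this is where I must be careful, because $\psi_{k+1}(z_k) \ge \min_x\psi_{k+1}(x)$, the wrong direction. The fix is to use Lemma~\ref{prox-ope-lemma}/\eqref{auxfunc-sconv}: $\psi_{k+1}(z_k) \ge \min_x\psi_{k+1}(x) + \beta_{k+1}\xi(z_{k+1},z_k) \ge \min_x\psi_{k+1}(x)$, which again goes the wrong way. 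So instead I keep $\psi_{k+1}(z_k)$ and bound it from above differently, or — better — I reverse: the chain should go $S_{k+1}f(\hat x_{k+1}) \le S_k f(\hat x_k) + \lambda_{k+1}l_f(x_{k+1};z_k) \le [\min_x\psi_k(x) + C_k] + \lambda_{k+1}l_f(x_{k+1};z_k)$, and then I need $\min_x\psi_k(x) + \lambda_{k+1}l_f(x_{k+1};z_k) \le \min_x\psi_{k+1}(x) + C_{k+1} - C_k$. By Property~\ref{framework}(ii) and \eqref{auxfunc-sconv}, for all $x$: $\psi_{k+1}(x) \ge \min_z\psi_k(z) + \lambda_{k+1}l_f(x_{k+1};x) + \beta_k\xi(z_k,x)$, and by convexity of $l_f(x_{k+1};\cdot)$ and Lemma~\ref{tool-nonsm} (first inequality, $z = z_k$), $\lambda_{k+1}l_f(x_{k+1};x) + \beta_k\xi(z_k,x) \ge \lambda_{k+1}l_f(x_{k+1};z_k) - \tfrac{\lambda_{k+1}^2}{2\sigma\beta_k}\|g_{k+1}\|_*^2$ — note $l_f(x_{k+1};x) - l_f(x_{k+1};z_k) = \langle g_{k+1}, x - z_k\rangle$, so this is exactly the first inequality of Lemma~\ref{tool-nonsm} with $\lambda=\lambda_{k+1}$ after adding $\lambda_{k+1}l_f(x_{k+1};z_k)$ to both sides. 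Minimizing over $x\in Q$ gives $\min_x\psi_{k+1}(x) \ge \min_z\psi_k(z) + \lambda_{k+1}l_f(x_{k+1};z_k) - \tfrac{\lambda_{k+1}^2}{2\sigma\beta_k}\|g_{k+1}\|_*^2$, which is precisely what is needed, with $C_{k+1}-C_k = \tfrac{\lambda_{k+1}^2}{2\sigma\beta_k}\|g_{k+1}\|_*^2$. This completes (b').

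The main obstacle, as the above indicates, is the careful bookkeeping in (b') of the affine interpolation identity $S_{k+1}l_f(x_{k+1};x_{k+1}) = S_k l_f(x_{k+1};\hat x_k) + \lambda_{k+1}l_f(x_{k+1};z_k)$ and getting the direction of every inequality right — in particular recognizing that one should pass through $l_f(x_{k+1};z_k)$ rather than $f(z_k)$, and that Lemma~\ref{tool-nonsm}'s first (not second) inequality is the right tool there. Everything else is mechanical substitution of Property~\ref{framework}(ii) and the elementary bound $\tfrac12\|x\|^2 + \tfrac12\|s\|_*^2\ge\langle s,x\rangle$ packaged inside Lemma~\ref{tool-nonsm}, plus the monotonicity $\beta_{k+1}\ge\beta_k$ and nonnegativity $d\ge 0$.
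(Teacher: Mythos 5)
Your proposal is correct and follows essentially the paper's own route: parts (a) and (b) are the paper's chains (Property~\ref{framework}(ii) combined with Lemma~\ref{tool-nonsm}, the relation $(R_k)$ or $(\hat{R}_k)$, and convexity of $f$) merely rearranged by minimizing over a generic $x\in Q$ instead of evaluating at $z_{k+1}$, and in (b') your pivot through the affine identity $S_{k+1}l_f(x_{k+1};x_{k+1})=S_k l_f(x_{k+1};\hat{x}_k)+\lambda_{k+1}l_f(x_{k+1};z_k)$ is the paper's argument with $x'_{k+1}$ and $z_{k+1}-z_k=\frac{S_{k+1}}{\lambda_{k+1}}(x'_{k+1}-x_{k+1})$ repackaged, with Lemma~\ref{tool-nonsm} applied to the displacement $x-z_k$ rather than $z_{k+1}-z_k$. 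One cosmetic remark: your appeal to (\ref{auxfunc-sconv}) in (b') is unnecessary (and would not be available for arbitrary auxiliary functions satisfying Property~\ref{framework}, only for the construction (\ref{auxfunc})), but your displayed derivation in fact uses only Property~\ref{framework}(ii) together with $\beta_{k+1}\geq\beta_k$ and $d\geq 0$, so the proof stands as written.
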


\begin{proof}
For a test point $x_k \in Q$, we denote $g_k:=g(x_k) \in \partial{f}(x_k)$.

We remark that using the condition (ii) of Property~\ref{framework} we obtain the inequality
\[ \forall k \geq -1,~~\min_{x \in Q}\psi_{k+1}(x) \geq \min_{x \in Q}\psi_k(x) + \lambda_{k+1}l_f(x_{k+1},z_{k+1}) + \beta_k\xi(z_{k},z_{k+1}) \]
by setting $x=z_{k+1}=\argmin_{x \in Q}\psi_{k+1}(x)$ (recall that $d(x) \geq 0~(x \in Q)$ and $\beta_{k+1}\geq\beta_k$).\\
(a)~ Letting $k = -1$ in the condition (ii) and using the condition (i) of Property~\ref{framework}, we have
\begin{eqnarray*}
\min_{x \in Q}\psi_0(x) + \frac{\lambda_0^2}{2 \sigma \beta_{-1}}\norm{g_0}_*^2
	&\geq& \big[\min_{x \in Q}\psi_{-1}(x) + \lambda_0 l_f(x_0;z_0) + \beta_{-1} \xi(z_{-1},z_0)\big]
		 + \frac{\lambda_0^2}{2 \sigma \beta_{-1}}\norm{g_0}_*^2 \\
	&=& \lambda_0 l_f(x_0;z_0) + \beta_{-1} \xi(z_{-1},z_0)
		 + \frac{\lambda_0^2}{2 \sigma \beta_{-1}}\norm{g_0}_*^2 \\
	&=& \lambda_0 l_f(x_0;z_0) + \beta_{-1} \xi(x_0,z_0)
		 + \frac{\lambda_0^2}{2 \sigma \beta_{-1}}\norm{g_0}_*^2 \\
	&\geq& \lambda_0 f(x_0) \\
	&=& S_0 f(\hat{x}_0),
\end{eqnarray*}
where the last inequality is due to Lemma~\ref{tool-nonsm}. \\
(b)~ By the condition (ii) of Property~\ref{framework} and the assumptions for $x_{k+1}$ and $\hat{x}_{k+1}$, we obtain that
\\[1em]
$\ds\min_{x \in Q}\psi_{k+1}(x) + \frac{1}{2 \sigma}\sum_{i=0}^{k+1}\frac{\lambda_i^2}{\beta_{i-1}}\norm{g_i}_*^2$ \\[-	1em]
\begin{eqnarray}
	&\geq& \left[ \min_{x \in Q}\psi_k(x) + \lambda_{k+1}l_f(x_{k+1};z_{k+1}) + \beta_k \xi(z_{k},z_{k+1}) \right]
	+ \frac{\lambda_{k+1}^2}{2 \sigma \beta_k}\norm{g_{k+1}}_*^2 
	+ \frac{1}{2 \sigma}\sum_{i=0}^k\frac{\lambda_i^2}{\beta_{i-1}}\norm{g_i}_*^2
	\nonumber \\
	&=& \min_{x \in Q}\psi_k(x)
	+ \frac{1}{2 \sigma}\sum_{i=0}^k\frac{\lambda_i^2}{\beta_{i-1}}\norm{g_i}_*^2
	+ \left[  \lambda_{k+1}l_f(x_{k+1};z_{k+1}) + \beta_k \xi(x_{k+1},z_{k+1})  + \frac{\lambda_{k+1}^2}{2 \sigma \beta_k}\norm{g_{k+1}}_*^2 \right] \nonumber\\
	&\geq&
	\left[
	\min_{x \in Q}\psi_k(x)
	+ \frac{1}{2 \sigma}\sum_{i=0}^k\frac{\lambda_i^2}{\beta_{i-1}}\norm{g_i}_*^2
	\right]
	+ \lambda_{k+1}f(x_{k+1})\nonumber\\
	&\geq& S_k f(\hat{x}_k) + \lambda_{k+1} f(x_{k+1})\nonumber\\ 
	&\geq& S_{k+1} f\left(\frac{S_k \hat{x}_k + \lambda_{k+1}x_{k+1}}{S_{k+1}}\right)\nonumber\\
	& = & S_{k+1} f(\hat{x}_{k+1}) \nonumber,
\end{eqnarray}
where we used Lemma~\ref{tool-nonsm}, the relation $(R_k)$, and the convexity of $f$ in the last three inequalities, respectively.
This implies that the relation $(R_{k+1})$ holds.
Moreover, replacing the use of $(R_k)$ by $(\hat{R}_k)$ in the above
inequality, we obtain the relation $(\hat{R}_{k+1})$, which proves the latter assertion.\\
(b')~Denote $x'_{k+1}=\ds\frac{S_k \hat{x}_k + \lambda_{k+1} z_{k+1}}{S_{k+1}}$. Then the relation $x_{k+1}=\ds\frac{S_k \hat{x}_k + \lambda_{k+1} z_{k}}{S_{k+1}}$ yields
\[ z_{k+1} - z_{k} = \frac{S_{k+1}}{\lambda_{k+1}}(x'_{k+1}-x_{k+1}). \]
Thus the condition (ii) of Property~\ref{framework} and the relation $(R_k)$ imply that \\[1em]
$\ds\min_{x \in Q}\psi_{k+1}(x) + \frac{1}{2 \sigma}\sum_{i=0}^{k+1}\frac{\lambda_i^2}{\beta_{i-1}}\norm{g_i}_*^2$ \\[-1em]
\begin{eqnarray*}
	&\geq& \min_{x \in Q}\psi_k(x) + \lambda_{k+1}l_f(x_{k+1};z_{k+1}) + \beta_k \xi(z_{k},z_{k+1})
	+ \frac{\lambda_{k+1}^2}{2 \sigma \beta_k}\norm{g_{k+1}}_*^2
	+ \frac{1}{2 \sigma}\sum_{i=0}^k\frac{\lambda_i^2}{\beta_{i-1}}\norm{g_i}_*^2 \\
	&\geq& S_k f(\hat{x}_k) + \lambda_{k+1}l_f(x_{k+1};z_{k+1}) + \beta_k \xi(z_{k},z_{k+1}) + \frac{\lambda_{k+1}^2}{2 \sigma \beta_k}\norm{g_{k+1}}_*^2 \\
		&\geq& S_k l_f(x_{k+1};\hat{x}_k) + \lambda_{k+1}l_f(x_{k+1};z_{k+1}) + \beta_k \xi(z_{k},z_{k+1}) + \frac{\lambda_{k+1}^2}{2 \sigma \beta_k}\norm{g_{k+1}}_*^2 \\
		&=& S_{k+1} l_f\left(x_{k+1};\frac{S_k \hat{x}_k + \lambda_{k+1}z_{k+1}}{S_{k+1}}\right) + \beta_k \xi(z_{k},z_{k+1}) + \frac{\lambda_{k+1}^2}{2 \sigma \beta_k}\norm{g_{k+1}}_*^2 \\
		&=& S_{k+1} l_f(x_{k+1};x'_{k+1}) + \beta_k \xi(z_{k},z_{k+1}) + \frac{\lambda_{k+1}^2}{2 \sigma \beta_k}\norm{g_{k+1}}_*^2 \\
		&=& S_{k+1} f(x_{k+1}) + \fv{g_{k+1}}{S_{k+1}(x'_{k+1}-x_{k+1})} \\
		&& \hspace{3truecm}  + \beta_k \xi(z_{k},z_{k+1}) + \frac{\lambda_{k+1}^2}{2 \sigma \beta_k}\norm{g_{k+1}}_*^2 \\
		&=& S_{k+1} f(x_{k+1}) + \fv{\lambda_{k+1}g_{k+1}}{z_{k+1}-z_k}  + \beta_k \xi(z_{k},z_{k+1}) + \frac{\lambda_{k+1}^2}{2 \sigma \beta_k}\norm{g_{k+1}}_*^2 \\
		&\geq& S_{k+1} f(x_{k+1})\\
		&=& S_{k+1} f(\hat{x}_{k+1})
\end{eqnarray*}
where the last inequality is due to Lemma~\ref{tool-nonsm}.
\end{proof}


Now we are ready to propose the following two novel subgradient-based methods (Method~\ref{gen-alg-nonsm} (a) and (b)) for the non-smooth problems.
\begin{method}[General subgradient-based method]
\label{gen-alg-nonsm}
Suppose that the problem (\ref{primal-problem}) belongs to the class of non-smooth problems (Section~\ref{ssec-setting}).
Choose weight parameters $\{\lambda_k\}_{k \geq 0}$ and scaling parameters $\{\beta_k\}_{k \geq -1}$.
Generate sequences $\{(z_{k-1},x_k,g_k,\hat{x}_k)\}_{k \geq 0}$ by
\begin{equation}
\label{iter-nonsm}
(a) \quad x_k := z_{k-1} := \argmin_{x \in Q}\psi_{k-1}(x),~~ \hat{x}_k := \frac{1}{S_k} \sum_{i=0}^k \lambda_i x_i,~~g_k :=g(x_k) \in \partial{f}(x_k),~~ \textrm{for} ~ k\geq 0
\end{equation}
or by
\begin{equation}
\label{iter-nonsm-alt}
(b) \quad z_{k-1} := \argmin_{x \in Q}\psi_{k-1}(x),~~ \hat{x}_k := x_k := \frac{1}{S_k} \sum_{i=0}^k \lambda_i z_{i-1},~~g_k :=g(x_k) \in \partial{f}(x_k),~~ \textrm{for} ~ k\geq 0
\end{equation}
where $\{\psi_k(x)\}_{k \geq -1}$ is defined using the construction (\ref{auxfunc}) with the lower convex approximation $l_f(y;x)$ of $f(x)$ defined by (\ref{l_f-nonsm}), as well as any construction which admits Property~\ref{framework}.
\end{method}

Notice that the sequences $\{z_k\}_{k\geq -1}$ and $\{x_k\}_{k\geq 0}$ are dummy ones for the methods (a) and (b), respectively, 
but we kept them to preserve the notation.


\subsection{Convergence analysis of the general subgradient-based method}
\label{ssec-conv-nonsm}


\begin{corollary}
\label{alg-est-nonsm}
Given the weight parameter $\{\lambda_k\}_{k \geq 0}$, the
scaling parameter $\{\beta_k\}_{k \geq -1}$, and any sequence
$\{(z_{k-1}, x_k, g_k, \hat{x}_k)\}_{k \geq 0}$ generated by

(a) the first procedure (\ref{iter-nonsm}) in Method~\ref{gen-alg-nonsm}, we have: 
\begin{equation}\label{est-nonsm}
f(\hat{x}_k) - f(x^*) \leq \frac{1}{S_k}\sum_{i=0}^k\lambda_if(x_i) - f(x^*) \leq \frac{\beta_{k}l_d(z_k;x^*) + \ds\frac{1}{2 \sigma}\sum_{i=0}^k\frac{\lambda_i^2}{\beta_{i-1}}\norm{g_i}_*^2}{S_{k}}
\end{equation}
for all $k \geq 0$; or \\ 

(b)
the second procedure (\ref{iter-nonsm-alt}) in Method~\ref{gen-alg-nonsm}, we have:
\begin{equation*}
f(\hat{x}_k) - f(x^*) \leq \frac{\beta_{k}l_d(z_k;x^*) + \ds\frac{1}{2 \sigma}\sum_{i=0}^k\frac{\lambda_i^2}{\beta_{i-1}}\norm{g_i}_*^2}{S_{k}}
\end{equation*}
for all $k \geq 0$.
\end{corollary}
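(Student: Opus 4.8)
The plan is to deduce Corollary~\ref{alg-est-nonsm} directly from Theorem~\ref{estrel-anal-nonsm} together with Lemma~\ref{gen-bound} (and its alternative form for $(\hat{R}_k)$). The key observation is that Method~\ref{gen-alg-nonsm} has been engineered precisely so that the hypotheses of Theorem~\ref{estrel-anal-nonsm} are met: the auxiliary functions $\{\psi_k(x)\}$ are built via the construction (\ref{auxfunc}), which satisfies Property~\ref{framework} by Proposition~\ref{auxfunc-admit}; the lower convex approximation is (\ref{l_f-nonsm}); and $C_k$ is chosen to be (\ref{C_k-nonsm}). So the whole proof is an induction that propagates the relations $(R_k)$ (for part (a), also $(\hat R_k)$) and then an application of Lemma~\ref{gen-bound}.

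For part (a), I would argue as follows. Setting $\hat{x}_0 := x_0$, Theorem~\ref{estrel-anal-nonsm}(a) gives that both $(R_0)$ and $(\hat{R}_0)$ hold; note this is consistent with the definition $\hat{x}_0 = \frac{1}{S_0}\lambda_0 x_0 = x_0$ in (\ref{iter-nonsm}). For the inductive step, suppose $(R_k)$ and $(\hat{R}_k)$ hold. The iteration (\ref{iter-nonsm}) sets $x_{k+1} = z_k$, which is exactly the hypothesis of Theorem~\ref{estrel-anal-nonsm}(b); moreover the update $\hat{x}_{k+1} = \frac{1}{S_{k+1}}\sum_{i=0}^{k+1}\lambda_i x_i$ can be rewritten as $\hat{x}_{k+1} = \frac{S_k\hat{x}_k + \lambda_{k+1}x_{k+1}}{S_{k+1}}$ using $S_{k+1} = S_k + \lambda_{k+1}$ and $S_k \hat{x}_k = \sum_{i=0}^k \lambda_i x_i$, which matches the prescription of the theorem. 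Hence $(R_{k+1})$ and $(\hat{R}_{k+1})$ follow. By induction $(R_k)$ and $(\hat{R}_k)$ hold for all $k \geq 0$. Applying Lemma~\ref{gen-bound} to $(R_k)$ with $C_k$ as in (\ref{C_k-nonsm}) gives $f(\hat{x}_k) - f(x^*) \leq \frac{\beta_k l_d(z_k;x^*) + C_k}{S_k}$, while applying the alternative bound (\ref{gen-bound-alt-nonsm}) to $(\hat{R}_k)$ gives $\frac{1}{S_k}\sum_{i=0}^k \lambda_i f(x_i) - f(x^*) \leq \frac{\beta_k l_d(z_k;x^*) + C_k}{S_k}$; combining these with the convexity inequality $f(\hat{x}_k) \leq \frac{1}{S_k}\sum_{i=0}^k \lambda_i f(x_i)$ (valid since $\hat{x}_k$ is the corresponding convex combination of the $x_i$) yields the full chain (\ref{est-nonsm}).

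For part (b), the argument is the same skeleton but uses Theorem~\ref{estrel-anal-nonsm}(b') instead. One checks that the update $x_{k+1} := \frac{1}{S_{k+1}}\sum_{i=0}^{k+1}\lambda_i z_{i-1}$ in (\ref{iter-nonsm-alt}) can be rewritten as $x_{k+1} = \frac{S_k \hat{x}_k + \lambda_{k+1}z_k}{S_{k+1}}$, since $\hat{x}_k = x_k = \frac{1}{S_k}\sum_{i=0}^k\lambda_i z_{i-1}$ gives $S_k\hat{x}_k = \sum_{i=0}^k \lambda_i z_{i-1}$ and the new term is $\lambda_{k+1}z_{(k+1)-1} = \lambda_{k+1}z_k$. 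This is exactly the hypothesis of (b'), which then yields $(R_{k+1})$ with $\hat{x}_{k+1} := x_{k+1}$, consistent with (\ref{iter-nonsm-alt}). The base case $(R_0)$ again comes from Theorem~\ref{estrel-anal-nonsm}(a) with $\hat{x}_0 = x_0$. Induction gives $(R_k)$ for all $k$, and Lemma~\ref{gen-bound} finishes the estimate; here there is no extra convexity step since $\hat{x}_k$ need not be a convex combination of test points, so only the single bound $f(\hat{x}_k) - f(x^*) \leq \frac{\beta_k l_d(z_k;x^*) + C_k}{S_k}$ is claimed.

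The proof is essentially bookkeeping: all the analytic content is already carried by Theorem~\ref{estrel-anal-nonsm} and Lemma~\ref{gen-bound}. The only place requiring a small amount of care—the main (if minor) obstacle—is verifying that the compact averaged updates for $\hat{x}_k$ in (\ref{iter-nonsm}) and for $x_k$ in (\ref{iter-nonsm-alt}) are algebraically identical to the recursive one-step updates demanded by parts (b) and (b') of the theorem, and that the index shift $z_{i-1}$ in (\ref{iter-nonsm-alt}) lines up correctly so that the newly added term is $\lambda_{k+1}z_k$ rather than $\lambda_{k+1}z_{k+1}$. Once these identifications are made, the corollary is immediate.
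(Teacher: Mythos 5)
Your proposal is correct and follows essentially the same route as the paper: Proposition~\ref{auxfunc-admit} guarantees Property~\ref{framework}, Theorem~\ref{estrel-anal-nonsm} (parts (a), (b), (b')) propagates $(R_k)$ and, for procedure (\ref{iter-nonsm}), also $(\hat{R}_k)$, and then Lemma~\ref{gen-bound} together with its alternative (\ref{gen-bound-alt-nonsm}) and the convexity of $f$ give (\ref{est-nonsm}). The paper's proof is just a terser statement of exactly this bookkeeping, so the only difference is that you spell out the induction and the algebraic identification of the averaged updates with the one-step recursions, which the paper leaves implicit.
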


\begin{proof}
The first inequality in (\ref{est-nonsm}) is from the convexity of $f(x)$.
Proposition~\ref{auxfunc-admit} and
Theorem~\ref{estrel-anal-nonsm} show that the sequences 
generated by the procedures (\ref{iter-nonsm}) and (\ref{iter-nonsm-alt}) satisfy the 
relation $(R_k)$; futhermore, the former construction 
(\ref{iter-nonsm}) also satisfies $(\hat{R}_k)$.
Thus, Lemma~\ref{gen-bound} and the alternative (\ref{gen-bound-alt-nonsm}) of Lemma~\ref{gen-bound} for $(\hat{R}_k)$ prove the assertion.
\end{proof}

In \cite{Nes09}, Nesterov proposed to use of the
auxiliary sequence (\ref{eq:beta}) to ensure an efficient convergence of the DAM. This
sequence also satisfies the identity
\begin{equation}
\label{hat-beta}
\hat{\beta}_k = \sum_{i=-1}^{k-1} \frac{1}{\hat{\beta}_i}~~(k \geq 0)
\end{equation}
and the inequality
\begin{equation}
\label{bound-hat-beta}
\forall k \geq 0,\quad \sqrt{2k+1} \leq \hat{\beta}_k \leq \frac{1}{1+\sqrt{3}}+\sqrt{2k+1}.
\end{equation}


\begin{corollary}[see also \cite{Nes09}]
\label{param-nonsm}
Consider the following two choices for the
parameters.
\begin{description}

\item[(Simple Averages)]
Let $\{(z_{k-1}, x_k, g_k, \hat{x}_k)\}_{k \geq 0}$ be generated 
by Method~\ref{gen-alg-nonsm} with parameters 
$\lambda_k := 1$ and $\beta_k := \gamma \hat{\beta}_k$ for some $\gamma > 0$. Then we have
\begin{equation}
\label{optconv-nonsm-1}
\forall k \geq 0,\quad f(\hat{x}_k)-f(x^*) \leq \left( \gamma l_d(z_k;x^*) + \frac{M_k^2}{2 \sigma \gamma} \right)\frac{0.5 + \sqrt{2k+1}}{k+1}
\end{equation}
and
\begin{equation}
\label{seq-bd-nonsm-1}
\forall k \geq -1,\quad z_k, x_{k+1}, \hat{x}_{k+1} \in \left\{x \in Q ~:~ 
\norm{x-x^*}^2 \leq  \frac{2d(x^*)}{\sigma}+\frac{M_k^2}{\sigma^2 \gamma^2} \right\}
\end{equation}
where $M_{-1}=0$ and $M_k=\ds\max_{0 \leq i \leq k}\norm{g_i}_*$ for $k \geq 0$.

\item[(Weighted Averages)]
Let $\{(z_{k-1}, x_k, g_k, \hat{x}_k)\}_{k \geq 0}$ be generated by Method~\ref{gen-alg-nonsm} with parameters $\lambda_k := \ds\frac{1}{\norm{g_k}_*}$ and $\ds\beta_k := \frac{\hat{\beta}_k}{\rho\sqrt{\sigma}}$ for some $\rho > 0$. Then we have
\begin{equation}
\label{optconv-nonsm-2}
\forall k \geq 0,\quad f(\hat{x}_k) - f(x^*) \leq M_k\frac{1}{\sqrt{\sigma}} \left( \frac{l_d(z_k;x^*)}{\rho} + \frac{\rho}{2} \right)\frac{0.5 + \sqrt{2k+1}}{k+1}
\end{equation}
and
\begin{equation}
\label{seq-bd-nonsm-2}
\forall k \geq -1,\quad z_k, x_{k+1}, \hat{x}_{k+1}\in \left\{x \in Q ~:~ \norm{x-x^*}^2 \leq \frac{2d(x^*)+\rho^2}{\sigma} \right\}.
\end{equation}
\end{description}

Moreover, for both simple and weighted averages, the above $f(\hat{x}_k)-f(x^*)$'s can be replaced by its upper bound $\frac{1}{S_k}\sum_{i=0}^k\lambda_if(x_i) - f(x^*)$ when we use the first procedure (\ref{iter-nonsm}) in Method~\ref{gen-alg-nonsm}.
In this case, the left hand side of the inequality
can be replaced by $\min\{f(\hat{x}_k)-f(x^*), \min_{0 \leq i \leq k}f(x_i)-f(x^*)\}$.

\end{corollary}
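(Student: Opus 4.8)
The plan is to start from the master estimate of Corollary~\ref{alg-est-nonsm},
\[
f(\hat{x}_k) - f(x^*) \leq \frac{\beta_k l_d(z_k;x^*) + C_k}{S_k},\qquad C_k = \frac{1}{2\sigma}\sum_{i=0}^k \frac{\lambda_i^2}{\beta_{i-1}}\norm{g_i}_*^2,
\]
valid for both procedures (with the $\frac{1}{S_k}\sum_i \lambda_i f(x_i)$ refinement available for (\ref{iter-nonsm})), and then insert the two parameter choices, simplifying $C_k$ via the identity (\ref{hat-beta}), $\hat{\beta}_k = \sum_{i=-1}^{k-1}\hat{\beta}_i^{-1}$, and the estimate (\ref{bound-hat-beta}). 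For \emph{Simple Averages} ($\lambda_i\equiv1$, $\beta_i=\gamma\hat{\beta}_i$) one has $S_k=k+1$ and $\tfrac{\lambda_i^2}{\beta_{i-1}}\norm{g_i}_*^2 \leq \tfrac{M_k^2}{\gamma}\hat{\beta}_{i-1}^{-1}$, so (\ref{hat-beta}) gives $C_k\leq \tfrac{M_k^2}{2\sigma\gamma}\hat{\beta}_k$; factoring $\hat{\beta}_k$ out of the numerator and using $\hat{\beta}_k\leq \tfrac{1}{1+\sqrt3}+\sqrt{2k+1}\leq 0.5+\sqrt{2k+1}$ yields (\ref{optconv-nonsm-1}). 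For \emph{Weighted Averages} ($\lambda_i=\norm{g_i}_*^{-1}$, $\beta_i=\hat{\beta}_i/(\rho\sqrt\sigma)$), the cancellation $\tfrac{\lambda_i^2}{\beta_{i-1}}\norm{g_i}_*^2 = \rho\sqrt\sigma\,\hat{\beta}_{i-1}^{-1}$ gives exactly $C_k=\tfrac{\rho}{2\sqrt\sigma}\hat{\beta}_k$, while $\beta_k l_d(z_k;x^*)=\tfrac{\hat{\beta}_k}{\rho\sqrt\sigma}l_d(z_k;x^*)$ and $S_k\geq (k+1)/M_k$; collecting terms and again applying (\ref{bound-hat-beta}) gives (\ref{optconv-nonsm-2}). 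In both cases the bracketed coefficient multiplying $\hat{\beta}_k/S_k$ is automatically nonnegative (otherwise the master estimate would force $f(\hat{x}_k)<f(x^*)$), so replacing $\hat{\beta}_k$ by its upper bound preserves the direction of the inequality.

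For the boundedness assertions (\ref{seq-bd-nonsm-1})--(\ref{seq-bd-nonsm-2}) the essential step is to sandwich $\min_{x\in Q}\psi_k(x)$. On one side, $h_k(x):=\psi_k(x)-\beta_k d(x)$ is proper l.s.c.\ convex (established inside the proof of Proposition~\ref{auxfunc-admit}), so Lemma~\ref{prox-ope-lemma} gives $\psi_k(x^*)\geq \min_{x\in Q}\psi_k(x) + \beta_k\xi(z_k,x^*)\geq \min_{x\in Q}\psi_k(x) + \tfrac{\beta_k\sigma}{2}\norm{z_k-x^*}^2$. On the other side I would first prove, by induction following the construction (\ref{auxfunc}), the pointwise bound $\psi_k(x)\leq \sum_{i=0}^k\lambda_i l_f(x_i;x) + \beta_k d(x)$ on $Q$ --- immediate for a DA step, and for an extended-MD step (\ref{auxfunc-eMD}) one bounds the constant $\min_z\psi_k(z)$ using condition (iii) of Property~\ref{framework} --- so that, evaluating at $x^*$ with $l_f(x_i;x^*)\leq f(x^*)$, $\psi_k(x^*)\leq S_k f(x^*)+\beta_k d(x^*)$. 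For the matching lower bound, the relation $(R_k)$ (resp.\ $(\hat{R}_k)$) combined with $f(\hat{x}_k)\geq f(x^*)$ (resp.\ $f(x_i)\geq f(x^*)$) yields $\min_{x\in Q}\psi_k(x)\geq S_k f(x^*) - C_k$. Subtracting the two estimates gives $\tfrac{\beta_k\sigma}{2}\norm{z_k-x^*}^2\leq \beta_k d(x^*)+C_k$, hence $\norm{z_k-x^*}^2\leq \tfrac{2d(x^*)}{\sigma}+\tfrac{2C_k}{\sigma\beta_k}$, and the two parameter choices give $\tfrac{2C_k}{\sigma\beta_k}\leq \tfrac{M_k^2}{\sigma^2\gamma^2}$ and $\tfrac{2C_k}{\sigma\beta_k}=\tfrac{\rho^2}{\sigma}$, respectively.

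It remains to pass from $z_k$ to $x_{k+1}$ and $\hat{x}_{k+1}$. Since $x_0=\argmin_{x\in Q}d(x)$, we have $d(x^*)\geq\tfrac{\sigma}{2}\norm{x_0-x^*}^2$, so $z_{-1}=x_0$ lies in the ball too; moreover the stated radius is nondecreasing in $k$ (it grows only through $M_k$), so $z_{-1},\dots,z_k$ all lie in the ball attached to step $k$. In procedure (\ref{iter-nonsm}), $x_{k+1}=z_k$ and $\hat{x}_{k+1}=\frac{1}{S_{k+1}}\sum_{i=0}^{k+1}\lambda_i z_{i-1}$ is a convex combination of $z_{-1},\dots,z_k$; in procedure (\ref{iter-nonsm-alt}), $x_{k+1}=\hat{x}_{k+1}$ equals that same convex combination. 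As a ball centred at $x^*$ is convex, (\ref{seq-bd-nonsm-1})--(\ref{seq-bd-nonsm-2}) follow. Finally, for procedure (\ref{iter-nonsm}) Corollary~\ref{alg-est-nonsm}(a) bounds $\frac{1}{S_k}\sum_i\lambda_i f(x_i) - f(x^*)$ by the same right-hand side, and this quantity dominates $f(\hat{x}_k)-f(x^*)$ (convexity of $f$, as $\hat{x}_k=\frac{1}{S_k}\sum_i\lambda_i x_i$) as well as $\min_{0\leq i\leq k}f(x_i)-f(x^*)$ (a positive-weight average exceeds the minimum), which gives the last claim. I expect the only real work to be the two-sided estimate of $\min_{x\in Q}\psi_k(x)$ --- in particular the auxiliary pointwise upper bound on $\psi_k$ --- together with the bookkeeping around (\ref{hat-beta})--(\ref{bound-hat-beta}) and the sign check mentioned above; none of this is deep, but it is where care is required.
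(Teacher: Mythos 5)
Your proposal is correct, and the rate part ((\ref{optconv-nonsm-1}), (\ref{optconv-nonsm-2}), including the sign check needed before replacing $\hat{\beta}_k$ and $1/S_k$ by their upper bounds, and the final claim for procedure (\ref{iter-nonsm})) follows the paper's proof verbatim: substitute the parameters into Corollary~\ref{alg-est-nonsm} and invoke (\ref{hat-beta})--(\ref{bound-hat-beta}). Where you diverge is the boundedness step. The paper gets the needed inequality in one line: since $f(\hat{x}_k)-f(x^*)\geq 0$, the bracket in the already-proved estimate must be nonnegative, i.e.\ $l_d(z_k;x^*)\geq -C_k/\beta_k$, and then $d(x^*)\geq l_d(z_k;x^*)+\frac{\sigma}{2}\norm{z_k-x^*}^2$ gives the ball directly. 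You instead sandwich $\min_{x\in Q}\psi_k(x)$ between $S_kf(x^*)-C_k$ (from $(R_k)$) and $\psi_k(x^*)-\beta_k\xi(z_k,x^*)$, which forces you to prove an auxiliary inductive pointwise majorization $\psi_k(x)\leq\sum_{i=0}^k\lambda_i l_f(x_i;x)+\beta_k d(x)$ and to appeal to Lemma~\ref{prox-ope-lemma}. This works, but it is a detour: since $l_d(z_k;x^*)=d(x^*)-\xi(z_k,x^*)$, your sandwich reproduces exactly the chain already packaged in condition (iii) of Property~\ref{framework} and Lemma~\ref{gen-bound}, so you could skip both the pointwise bound and Lemma~\ref{prox-ope-lemma} by evaluating (\ref{cond3}) at $x^*$ (or by the paper's nonnegativity observation). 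The detour also has a mild cost in generality: your induction and the validity of Lemma~\ref{prox-ope-lemma} for $\psi_k$ are tied to the concrete construction (\ref{auxfunc}), whereas Method~\ref{gen-alg-nonsm} (and hence the corollary) allows any auxiliary functions admitting Property~\ref{framework}, and the paper's argument covers that general case; the fix is immediate as just indicated. The remaining pieces --- $z_{-1}=x_0$ lying in the ball, monotonicity of the balls, and the convex-combination argument for $x_{k+1},\hat{x}_{k+1}$ --- match the paper.
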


\begin{proof}
Substituting the specified $\lambda_k$ and $\beta_k$ into the estimations in Corollary~\ref{alg-est-nonsm} and using the properties (\ref{hat-beta}) and (\ref{bound-hat-beta}) of $\hat{\beta}_k$, we obtain (\ref{optconv-nonsm-1}) and (\ref{optconv-nonsm-2}), respectively.
Denote by $B_k$ the ball on the right hand side of (\ref{seq-bd-nonsm-1}) for 
$k \geq -1$.
Then $B_{k} \subset B_{k+1}$ for each $k \geq -1$.
The inequality (\ref{optconv-nonsm-1}) implies that $\gamma l_d(z_k;x^*)+(2\sigma\gamma)^{-1}M_k^2 \geq 0$ for all $k \geq 0$, and using the strong convexity, $d(x^*) \geq l_d(z_k;x^*)+\frac{\sigma}{2}\norm{x^*-z_k}^2$, we can obtain that $z_k \in B_k$ for each $k \geq 0$.
We also have $z_{-1} \in B_{-1}$ since $z_{-1}=x_0=\argmin_{x \in Q}d(x)$, $d(z_{-1})=d(x_0)=0$, and $d(x^*) \geq l_d(z_{-1};x^*)+\frac{\sigma}{2}\norm{z_{-1}-x^*}^2 \geq \frac{\sigma}{2}\norm{z_{-1}-x^*}^2$.
Finally, we conclude that $x_{k+1}, \hat{x}_{k+1} \in B_k$ for all $k \geq -1$ because they are convex combinations of $\{z_i\}_{i=-1}^{k}$.
The proof of (\ref{seq-bd-nonsm-2}) is similar.
\end{proof}

\begin{remark}
\label{Nes-bd-strength}
Notice that in our approach, the bounds in (\ref{optconv-nonsm-1})
and (\ref{optconv-nonsm-2}) are slightly smaller than the
ones in (3.3) and (3.5) in \cite{Nes09}, respectively, since 
$l_d(z_{k};x^*)\leq d(x^*)\leq D$.
However, essentially, Nesterov's original argument also arrives to the same bound when $d(x)$ is continuously differentiable on $Q$ (note that the argument in \cite{Nes09} does not impose differentiability on $d(x)$).
In fact, in \cite{Nes09}, Theorems 2 and 3 rely on the estimate (2.15) which is implied from (2.18). Notice in (2.18) that we have
\[ -V_{\beta_{k+1}}(-s_{k+1})=\min_{x \in Q}\{\fv{s_{k+1}}{x-x_0}+\beta_{k+1}d(x)\}=\min_{x \in Q}\{\fv{s_{k+1}}{x-x_0}+\beta_{k+1}l_d(x_{k+1};x)\} \]
by the optimality of $x_{k+1}=\pi_{\beta_{k+1}}(-s_{k+1})$.
Then adding $\sum_{i=0}^k\lambda_i [f(x_i)+\fv{g_i}{x_0-x_i}]$ and using $s_{k+1}=\sum_{i=0}^k \lambda_i g_i$ in (2.18), it yields
\[ \sum_{i=0}^k\lambda_if(x_i) \leq \min_{x \in Q}\left\{\sum_{i=0}^k\lambda_i[f(x_i)+\fv{g_i}{x-x_i}] + \beta_{k+1}l_d(x_{k+1};x)\right\}+\frac{1}{2\sigma}\sum_{i=0}^k\frac{\lambda_i^2}{\beta_i}\norm{g_i}_*^2 \]
which corresponds to the relation $(\hat{R}_k)$\footnote{Notice that $x_{k+1}$ and $\beta_{k+1}$ in \cite{Nes09} are called $z_{k}$ and $\beta_k$ here, respectively.}. Thus we obtained the same bound as our analysis for the DA model.
\end{remark}

A consequence of 
Corollary~\ref{param-nonsm} is that if $M:=\sup\{\|g\|_* : g \in \partial{f}(x),~ x \in Q\}$ is finite,
Method~\ref{gen-alg-nonsm} generates a sequence
$\{\hat{x}_k\}$ such that $f(\hat{x}_k) \to f(x^*)$ with a rate $O(1/\sqrt{k})$ in the number $k$ of iterations.
In particular, if we know an upper bound $R\geq \sqrt{\frac{1}{\sigma}d(x^*)}$ and for the single averages case additionally the $M$, the estimates (\ref{optconv-nonsm-1}) and (\ref{optconv-nonsm-2}) achieve the optimal complexity $O(M^2R^2/\varepsilon^2)$ to obtain an $\varepsilon$-solution for the non-smooth problems when we choose $\gamma:=\frac{M}{\sqrt{2}\sigma R}$ and $\rho:=\sqrt{2\sigma}R$, respectively.
Also Method~\ref{gen-alg-nonsm} with the parameters suggested in Corollary~\ref{param-nonsm} produces bounded sequences $\{x_k\}$, $\{\hat{x}_k\}$, and $\{z_k\}$ (even if $M=+\infty$ for the weighted averages case).

These features are similar to the DAM. We can obtain the optimal convergence rate if we know an upper bound for $d(x^*)$, but without assuming the compactness of $Q$ and fixing the required number of iterations.


\subsection{Particular cases for the extended MD and the DA models}
\label{ssec-particularize-nonsm}

Restricting Method~\ref{gen-alg-nonsm} (a) only to the extended MD 
model (\ref{auxfunc-eMD}), we can obtain the following extension of the MDM.


\begin{submethod}[Extended Mirror-Descent]
\label{eMD-alg}
Suppose that the problem (\ref{primal-problem}) belongs to the class of non-smooth problems.
Set $x_0 :=\argmin_{x \in Q}d(x)$.
Choose weight parameters $\{\lambda_k\}_{k \geq 0}$ and scaling parameters $\{\beta_k\}_{k \geq -1}$.
Generate sequences $\{(x_k, g_k, \hat{x}_k)\}_{k \geq 0}$ by
\begin{eqnarray*}
g_k &:=& g(x_k) \in \partial{f}(x_k), \\
x_{k+1} &:=& \argmin_{x \in Q}\big\{ \lambda_{k}[f(x_k) + \langle g_k,x-x_k \rangle] + \beta_k d(x) - \beta_{k-1}l_d(x_k;x) \big\},\\
\hat{x}_k &:=& \frac{1}{S_k}\sum_{i=0}^k \lambda_i x_i
\end{eqnarray*}
for $k \geq 0$.
\end{submethod}

The iteration updates described by (\ref{MD-org-iter}) of the original MDM corresponds to the extended MDM (Method~\ref{eMD-alg}) with $\beta_k := 1$.

It is important to note that, according to Corollary~\ref{param-nonsm}, the extended MDM ensures the rate $O(1/\sqrt{k})$ of convergence without fixing a priori the total number of iterations and knowing an upper bound of $d(x^*)$ required for the weight parameters (\ref{MD-org-weights}) of the original MDM.
Furthermore, this advantage holds {\it even if} the feasible region $Q$ is unbounded. The existing averaging techniques \cite{NL14,NJLS09} of the MDM assume the compactness of $Q$ to achieve the same complexity.

If we restrict Method~\ref{gen-alg-nonsm} (a) only to the DA model~(\ref{auxfunc-DA}), we obtain the Nesterov's DAM~(\ref{DA-org-iter}) described in Section~\ref{sssec-DAM}.
In particular, Corollary~\ref{alg-est-nonsm} and 
subsequently Corollary~\ref{param-nonsm} provide a small improvement 
over the original result assuming the differentiability of $d(x)$ (see Remark~\ref{Nes-bd-strength}).
Since our analysis does not introduce the dual space,
the arguments are more straightforward than the original one.

We can also obtain variants of the extended MDM and the DAM from Method~\ref{gen-alg-nonsm} (b).
An interesting feature of these variants is that their convergence results are in relation to the test points $x_k(=\hat{x}_k)$ compared to the average of test points $\hat{x}_k$ of the extended MDM or the DAM. In particular, the variant of the DAM, \ie, Method~\ref{gen-alg-nonsm} (b) with only the DA model (\ref{auxfunc-DA}) corresponds to the double averaging method (\ref{double-ave}) proposed by Nesterov and Shikhman \cite{NS15}.


\section{A family of (inexact) gradient-based methods for structured problems in the unifying framework}
\label{sec-struc}

We now focus, in this section, on the structured problems introduced in Section~\ref{ssec-setting} and  we establish a new family of (inexact) gradient-based methods for them. In Section~\ref{ssec-alg-struc}, we propose the classical and the fast gradient methods (Methods~\ref{gen-alg-clas} and~\ref{gen-alg-fast}, respectively). Then, we analyze the convergence of the proposed methods in Section~\ref{ssec-conv-struc} and finally we do a comparison with existing methods in Section~\ref{ssec-particularize-struc}.

Throughout this section, we suppose that the problem (\ref{primal-problem}) belongs to the class of structured problems, namely, we assume that the inequality (\ref{l_f-struc}) holds for a proper lower semicontinuous convex function $l_f(y;x)$ (a lower convex approximation of $f(x)$) and functions $L(y)>0, \delta(y)\geq 0$ for all $y \in Q$ and we further assume that the subproblem (\ref{gen-subprob}) is efficiently solvable (see Section~\ref{ssec-setting}).


\subsection{The classical gradient method and the fast gradient method in the unifying framework}
\label{ssec-alg-struc}

Here, we develop update formulas for the test points $\{x_k\}$ and the approximate solutions $\{\hat{x}_k\}$ which will satisfy the relation $(R_k)$ under Property~\ref{framework}.

In this section, we also consider the following alternative of the relation $(R_k)$ for some constant $C_k$:
\begin{equation}\label{hat-R-smooth}
(\hat{R}'_k)~~\sum_{i=0}^k \lambda_if(x_{i+1}) \leq \min_{x \in Q}\psi_k(x) + C_k.
\end{equation}
Notice that the relation $(\hat{R}_k')$ is slightly different from the one $(\hat{R}_k)$ of the non-smooth problems. This relation satisfies the following alternative of Lemma~\ref{gen-bound} (see also (\ref{gen-bound-alt-nonsm})): if $\{\psi_k(x)\}$ satisfies Property~\ref{framework} and the relation $(\hat{R}'_k)$ is satisfied for some $k \geq 0$, then we have
\begin{equation}
\label{gen-bound-alt-smooth}
\frac{1}{S_k}\sum_{i=0}^k \lambda_i f(x_{i+1}) - f(x^*) \leq \frac{\beta_k l_d(z_k,x^*) + C_k}{S_k}.
\end{equation}

The following theorem validates our methods.


\begin{theorem}
\label{estrel-anal-smooth}
Suppose that the problem (\ref{primal-problem}) belongs to the class of structured problems for a lower convex approximation $l_f(y;x)$ of $f(x)$ and functions $L(y) >0$, $\delta(y)\geq 0$.
Let $\{\psi_k(x)\}_{k \geq -1}$ be a sequence of auxiliary functions satisfying Property~\ref{framework} associated with weight parameters $\{\lambda_k\}_{k \geq 0}$, scaling parameters $\{\beta_k\}_{k \geq -1}$, and test points $\{x_k\}_{k \geq 0}$.
Denote $z_k = \argmin_{x \in Q}\psi_k(x)$.
Then the following assertions hold.
\begin{itemize}

\item[(a)] If $\sigma\beta_{-1}/\lambda_0 \geq L(x_0)$ holds, then relation $(R_0)$ is satisfied with $\hat{x}_0:=z_0$ and $C_0:=\lambda_0 \delta(x_0)$ .

\item[(b)] Suppose that the relation $(R_k)$ is satisfied for some integer $k \geq 0$.
If the relations $x_{k+1}=z_k$ and $\sigma \beta_k /\lambda_{k+1} \geq L(x_{k+1})$ hold, then the relation $(R_{k+1})$ is satisfied with 
\begin{equation}
\label{update-clas}
\hat{x}_{k+1} := \frac{S_k \hat{x}_k + \lambda_{k+1}z_{k+1}}{S_{k+1}}, \quad C_{k+1} := C_k + \lambda_{k+1}\delta(x_{k+1}).
\end{equation}
\\
Moreover, the same conclusion is valid for $(\hat{R}_k')$ and $(\hat{R}'_{k+1})$ without requiring the relation (\ref{update-clas}) for $\hat{x}_{k+1}$.

\item[(b')] Suppose that the relation $(R_k)$ is satisfied for some integer $k \geq 0$.
If the relations
\begin{equation*}
x_{k+1} = \frac{S_k \hat{x}_k + \lambda_{k+1}z_k}{S_{k+1}} \quad
\textrm{and} \quad \sigma \beta_k S_{k+1}/\lambda_{k+1}^2 \geq L(x_{k+1})
\end{equation*}
hold, then the relation $(R_{k+1})$ is satisfied with
\begin{equation*}
\hat{x}_{k+1} := \frac{S_k \hat{x}_k + \lambda_{k+1}z_{k+1}}{S_{k+1}},\quad C_{k+1} := C_k + S_{k+1}\delta(x_{k+1}).
\end{equation*}

\end{itemize}
\end{theorem}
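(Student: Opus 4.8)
The plan is to follow the template of the proof of Theorem~\ref{estrel-anal-nonsm}, with the Fenchel--Young bound of Lemma~\ref{tool-nonsm} replaced by the upper inequality in (\ref{l_f-struc}): every time we pass from $l_f(x_{k+1};\cdot)$ to $f$ we pay a quadratic error $\tfrac{L(x_{k+1})}{2}\norm{\,\cdot\,-x_{k+1}}^2$, and the stated step-size conditions are precisely what makes this error be dominated by the $\tfrac{\sigma}{2}\norm{\,\cdot\,}^2$ coming from the strong convexity of the Bregman distance ($\xi(z,x)\ge\tfrac{\sigma}{2}\norm{x-z}^2$). The common starting point for all three items is the inequality obtained from condition~(ii) of Property~\ref{framework} by evaluating at $x=z_{k+1}=\argmin_{x\in Q}\psi_{k+1}(x)$ and using $\beta_{k+1}\ge\beta_k$ together with $d(z_{k+1})\ge0$, namely
\[
\min_{x\in Q}\psi_{k+1}(x)\ \geq\ \min_{x\in Q}\psi_k(x)+\lambda_{k+1}\,l_f(x_{k+1};z_{k+1})+\beta_k\,\xi(z_k,z_{k+1}),\qquad k\geq -1,
\]
where for $k=-1$ one uses condition~(i), i.e.\ $\min_{x\in Q}\psi_{-1}(x)=0$ and $z_{-1}=x_0$.

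For item (a) I set $k=-1$ above, bound $\xi(x_0,z_0)\ge\tfrac{\sigma}{2}\norm{z_0-x_0}^2$, and replace $l_f(x_0;z_0)$ by $f(z_0)-\tfrac{L(x_0)}{2}\norm{z_0-x_0}^2-\delta(x_0)$ via (\ref{l_f-struc}); the leftover $\tfrac12(\sigma\beta_{-1}-\lambda_0L(x_0))\norm{z_0-x_0}^2$ is nonnegative since $\sigma\beta_{-1}/\lambda_0\ge L(x_0)$, which yields $(R_0)$ with $\hat x_0=z_0$ and $C_0=\lambda_0\delta(x_0)$. For item (b), the relation $x_{k+1}=z_k$ turns $\xi(z_k,z_{k+1})$ into $\xi(x_{k+1},z_{k+1})\ge\tfrac{\sigma}{2}\norm{z_{k+1}-x_{k+1}}^2$; after the same substitution for $l_f(x_{k+1};z_{k+1})$ and discarding the nonnegative term $\tfrac12(\sigma\beta_k-\lambda_{k+1}L(x_{k+1}))\norm{z_{k+1}-x_{k+1}}^2$, I invoke $(R_k)$ and then convexity of $f$ to combine $S_k f(\hat x_k)+\lambda_{k+1}f(z_{k+1})\ge S_{k+1}f(\hat x_{k+1})$, giving $(R_{k+1})$ with the stated $\hat x_{k+1}$ and $C_{k+1}$. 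The $(\hat{R}'_k)\Rightarrow(\hat{R}'_{k+1})$ variant follows from the same chain of inequalities without the concluding convexity-of-$f$ step: $(\hat{R}'_k)$ already records $\sum_{i=0}^k\lambda_i f(x_{i+1})$, and the fresh term $\lambda_{k+1}f(z_{k+1})=\lambda_{k+1}f(x_{k+2})$ extends the sum to $k+1$ (the relation $x_{j+1}=z_j$ being in force at every step), so $\hat x_{k+1}$ plays no role.

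The delicate item is (b'). Put $\hat x_{k+1}=x'_{k+1}:=\ds\frac{S_k\hat x_k+\lambda_{k+1}z_{k+1}}{S_{k+1}}$; the test-point relation $x_{k+1}=\ds\frac{S_k\hat x_k+\lambda_{k+1}z_k}{S_{k+1}}$ then gives $z_{k+1}-z_k=\ds\frac{S_{k+1}}{\lambda_{k+1}}(x'_{k+1}-x_{k+1})$. Starting from the displayed inequality, I first use $(R_k)$ and $l_f(x_{k+1};\cdot)\le f$ to get $\min_{x\in Q}\psi_k(x)\ge S_k\,l_f(x_{k+1};\hat x_k)-C_k$; convexity of $l_f(x_{k+1};\cdot)$ then yields $S_k\,l_f(x_{k+1};\hat x_k)+\lambda_{k+1}\,l_f(x_{k+1};z_{k+1})\ge S_{k+1}\,l_f(x_{k+1};x'_{k+1})$, and (\ref{l_f-struc}) gives $l_f(x_{k+1};x'_{k+1})\ge f(x'_{k+1})-\tfrac{L(x_{k+1})}{2}\norm{x'_{k+1}-x_{k+1}}^2-\delta(x_{k+1})$. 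Finally $\beta_k\xi(z_k,z_{k+1})\ge\tfrac{\sigma\beta_k}{2}\norm{z_{k+1}-z_k}^2=\tfrac{\sigma\beta_k S_{k+1}^2}{2\lambda_{k+1}^2}\norm{x'_{k+1}-x_{k+1}}^2$, so all quadratic terms collect into $\tfrac{S_{k+1}}{2}\bigl(\tfrac{\sigma\beta_k S_{k+1}}{\lambda_{k+1}^2}-L(x_{k+1})\bigr)\norm{x'_{k+1}-x_{k+1}}^2\ge0$ precisely under the hypothesis $\sigma\beta_k S_{k+1}/\lambda_{k+1}^2\ge L(x_{k+1})$; dropping it leaves $(R_{k+1})$ with $C_{k+1}=C_k+S_{k+1}\delta(x_{k+1})$. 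The main obstacle is exactly this bookkeeping: because $l_f(x_{k+1};\cdot)$ is evaluated at the convex combination $x'_{k+1}$, the structured error carries a factor $S_{k+1}$, while the Bregman term supplies $\norm{z_{k+1}-z_k}^2$, which the test-point relation inflates by $(S_{k+1}/\lambda_{k+1})^2$; matching the two is what forces the $S_{k+1}/\lambda_{k+1}^2$ scaling in the step-size condition. The point that distinguishes (b') from (b) is that one may never invoke convexity of $f$ itself here --- only convexity of $l_f(x_{k+1};\cdot)$ followed by the quadratic upper bound --- since (\ref{l_f-struc}) provides no lower estimate of $f$ at $\hat x_{k+1}$.
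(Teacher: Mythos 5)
Your proposal is correct and follows essentially the same route as the paper's proof: condition (ii) evaluated at $z_{k+1}$ with $\beta_{k+1}\geq\beta_k$, the strong-convexity bound $\xi\geq\tfrac{\sigma}{2}\norm{\cdot}^2$, the upper inequality in (\ref{l_f-struc}) absorbed by the stated step-size conditions, and in (b') convexity of $l_f(x_{k+1};\cdot)$ rather than of $f$ together with $z_{k+1}-z_k=\tfrac{S_{k+1}}{\lambda_{k+1}}(\hat{x}_{k+1}-x_{k+1})$. Your explicit remark that the $(\hat{R}'_{k+1})$ variant identifies $z_{k+1}$ with $x_{k+2}$ via the update rule being in force at every step is consistent with (indeed slightly more careful than) the paper's treatment.
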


\begin{proof}
Denote $L_k = L(x_k)$ and $\delta_k = \delta(x_k)$.\\
(a)~ The conditions (i) and (ii) of Property~\ref{framework} with $k=-1$ yield that
\begin{eqnarray*}
\min_{x \in Q} \psi_0(x) + \lambda_0 \delta_0
	&\geq& \min_{x \in Q} \psi_{-1}(x) + \lambda_0 l_f(x_0;z_0)  + \beta_{-1} \xi(z_{-1},z_0) + \lambda_0 \delta_0 \\
	&=& \lambda_0\left(l_f(x_0;z_0) + \frac{\beta_{-1}}{\lambda_0}\xi(x_0,z_0) + \delta_0 \right) \\
	&\geq& \lambda_0\left(l_f(x_0;z_0) + \frac{\sigma\beta_{-1}}{\lambda_0}\frac{1}{2}\norm{z_0 - x_0}^2 + \delta_0 \right) \\
	&\geq& \lambda_0\left(l_f(x_0;z_0) + \frac{L_0}{2}\norm{z_0 - x_0}^2 + \delta_0 \right) \\
	&\geq& \lambda_0 f(z_0)
	= S_0 f(\hat{x}_0)\\
\end{eqnarray*}
where the last inequality is due to (\ref{l_f-struc}).\\
(b)~ The condition (ii) of Property~\ref{framework} implies that
\begin{eqnarray}
\min_{x \in Q} \psi_{k+1}(x) + C_{k+1}
	&\geq& \min_{x \in Q} \psi_{k}(x) + C_k + \lambda_{k+1} l_f(x_{k+1};z_{k+1}) + \beta_k \xi(z_k,z_{k+1}) + \lambda_{k+1}\delta_{k+1} \nonumber\\%
	&=& \min_{x \in Q} \psi_k(x) + C_k + \lambda_{k+1} l_f(x_{k+1};z_{k+1}) + \beta_k \xi(x_{k+1},z_{k+1}) + \lambda_{k+1}\delta_{k+1} \nonumber\\%
	&\geq&  \min_{x \in Q} \psi_k(x) + C_k + \lambda_{k+1} \left(l_f(x_{k+1};z_{k+1}) + \frac{\sigma\beta_k}{2\lambda_{k+1}}\norm{z_{k+1} - x_{k+1}}^2 + \delta_{k+1} \right) \nonumber\\%
	&\geq&  \min_{x \in Q} \psi_k(x) + C_k + \lambda_{k+1} \left(l_f(x_{k+1};z_{k+1}) + \frac{L_{k+1}}{2}\norm{z_{k+1} - x_{k+1}}^2 + \delta_{k+1} \right) \nonumber\\%
	&\geq& \min_{x \in Q} \psi_k(x) + C_k + \lambda_{k+1}f(z_{k+1}) \label{ineq-lip}\\%
	&\geq& S_kf(\hat{x}_k) + \lambda_{k+1} f(z_{k+1}) \quad \label{ineq-estrel} \\%
	&\geq& S_{k+1}f\left(\frac{S_k \hat{x}_k + \lambda_{k+1}z_{k+1}}{S_{k+1}}\right) = S_{k+1} f(\hat{x}_{k+1}) \nonumber
\end{eqnarray}
where the inequalities (\ref{ineq-lip}) and (\ref{ineq-estrel}) are due to (\ref{l_f-struc}) and $(R_k)$, respectively.
When we use $(\hat{R}'_k)$ at (\ref{ineq-estrel}), it yields the relation $(\hat{R}'_{k+1})$.\\
(b')~ The assumptions for $x_{k+1}$ and $\hat{x}_{k+1}$ imply $z_{k+1}-z_k = \frac{S_{k+1}}{\lambda_{k+1}}(\hat{x}_{k+1}-x_{k+1})$. Thus, from the condition (ii) of Property~\ref{framework} and the relation $(R_k)$, we obtain
\begin{eqnarray*}
\min_{x \in Q} \psi_{k+1}(x) + C_{k+1}
	&\geq& \min_{x \in Q} \psi_{k}(x) + C_k + \lambda_{k+1} l_f(x_{k+1};z_{k+1}) + \beta_k \xi(z_k,z_{k+1}) + S_{k+1}\delta_{k+1} \\
	&\geq& S_kf(\hat{x}_k) + \lambda_{k+1} l_f(x_{k+1};z_{k+1}) + \beta_k \xi(z_k,z_{k+1}) + S_{k+1}\delta_{k+1} \\
	&\geq& S_kl_f(x_{k+1};\hat{x}_k) + \lambda_{k+1} l_f(x_{k+1};z_{k+1}) + \beta_k \xi(z_k,z_{k+1}) + S_{k+1}\delta_{k+1} \\
	&\geq& S_{k+1}l_f\left(x_{k+1};\frac{S_k \hat{x}_k + \lambda_{k+1} z_{k+1}}{S_{k+1}}\right) + \beta_k \xi(z_k,z_{k+1}) + S_{k+1}\delta_{k+1} \\
	&\geq& S_{k+1} l_f(x_{k+1};\hat{x}_{k+1}) + \frac{\sigma \beta_k}{2}\norm{z_{k+1} - z_k}^2 + S_{k+1}\delta_{k+1} \\
	&=& S_{k+1} \left( l_f(x_{k+1};\hat{x}_{k+1}) + \frac{\sigma \beta_k S_{k+1}}{2\lambda_{k+1}^2}\norm{\hat{x}_{k+1} - x_{k+1}}^2 + \delta_{k+1} \right) \\
	&\geq& S_{k+1} \left( l_f(x_{k+1};\hat{x}_{k+1}) + \frac{L_{k+1}}{2}\norm{\hat{x}_{k+1} - x_{k+1}}^2 + \delta_{k+1} \right) \\
	&\geq& S_{k+1} f(\hat{x}_{k+1}).
\end{eqnarray*}
\end{proof}

Now we are ready to propose the following two general gradient-based methods.


\begin{method}[Classical Gradient Method (CGM)]
\label{gen-alg-clas}
Suppose that the problem (\ref{primal-problem}) belongs to the class of structured problems.
Choose weight parameters $\{\lambda_k\}_{k \geq 0}$ and scaling parameters $\{\beta_k\}_{k \geq -1}$.
Generate sequences $\{(z_{k-1},x_k,\hat{x}_k)\}_{k\geq 0}$
by setting
\begin{equation*}
x_k := z_{k-1} := \argmin_{x \in Q}\psi_{k-1}(x),\quad \hat{x}_k := \frac{1}{S_k}\sum_{i=0}^k \lambda_i x_{i+1},
\end{equation*}
for $k\geq 0$, where $\{\psi_k(x)\}_{k \geq -1}$ is defined using the construction (\ref{auxfunc}) as well as any construction which admits Property~\ref{framework}.

\end{method}


\begin{method}[Fast Gradient Method (FGM)]
\label{gen-alg-fast}
Suppose that the problem (\ref{primal-problem}) belongs to the class of structured problems.
Choose weight parameters $\{\lambda_k\}_{k \geq 0}$ and scaling parameters $\{\beta_k\}_{k \geq -1}$. Set $x_0 := z_{-1} := \argmin_{x \in Q} d(x)$ and $\hat{x}_0 := z_0 := \argmin_{x \in Q}\psi_0(x)$.
Generate sequences $\{(z_{k-1},x_k,\hat{x}_k)\}_{k\geq 0}$ by setting
\begin{eqnarray*}
x_{k+1} &:=& \frac{\sum_{i=0}^k \lambda_i z_i + \lambda_{k+1}z_k}{S_{k+1}}, \\
z_{k+1} &:=& \argmin_{x \in Q} \psi_{k+1}(x), \\
\hat{x}_{k+1} &:=& \frac{1}{S_{k+1}}\sum_{i=0}^{k+1}\lambda_i z_i,
\end{eqnarray*}
for $k \geq 0$ where $\{\psi_k(x)\}_{k \geq -1}$ is defined using the construction (\ref{auxfunc}) as well as any construction which admits Property~\ref{framework}.
\end{method}

Notice that the sequence $\{z_k\}_{k\geq -1}$
is a dummy one for the CGM.


\subsection{Convergence analysis of the CGM and the FGM}
\label{ssec-conv-struc}

By the same observation as Corollary~\ref{alg-est-nonsm}, combining Theorem~\ref{estrel-anal-smooth} and Lemma~\ref{gen-bound} (or the alternative (\ref{gen-bound-alt-smooth}) of Lemma~\ref{gen-bound}), we arrive at the following estimates.


\begin{corollary}
\label{alg-est-smooth}
Suppose that the problem (\ref{primal-problem}) belongs to the class of structured problems.

\begin{itemize}

\item[(a)]
Let $\{(z_{k-1},x_k,\hat{x}_k)\}_{k \geq 0}$ be generated by the CGM associated with weight parameters $\{\lambda_k\}_{k \geq 0}$ and scaling parameters $\{\beta_k\}_{k \geq -1}$.
If $\sigma \beta_{k-1} /\lambda_k \geq L(x_k)$ holds for all $k \geq 0$, then we have
\begin{equation*}
\forall k \geq 0, ~~ f(\hat{x}_k) - f(x^*) \leq \frac{1}{S_k}\sum_{i=0}^k\lambda_if(x_{i+1}) - f(x^*) \leq \frac{\beta_k l_d(z_k;x^*) + \sum_{i=0}^k \lambda_i \delta(x_i)}{S_k}.
\end{equation*}

\item[(b)]
Let $\{(z_{k-1},x_k,\hat{x}_k)\}_{k \geq 0}$ be generated by the FGM associated with weight parameters $\{\lambda_k\}_{k \geq 0}$ and scaling parameters $\{\beta_k\}_{k \geq -1}$.
If $\sigma \beta_{k-1} S_k/\lambda_k^2 \geq L(x_k)$ holds for all $k \geq 0$, then we have
\begin{equation*}
\forall k \geq 0, ~~ f(\hat{x}_k) - f(x^*) \leq \frac{\beta_k l_d(z_k;x^*) + \sum_{i=0}^k S_i\delta(x_i)}{S_k}.
\end{equation*}

\end{itemize}
\end{corollary}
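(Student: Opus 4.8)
The plan is, for each of the two methods, to prove by induction on $k$ that the generated sequences satisfy one of the estimate relations of Sections~\ref{sec-nonsm}--\ref{sec-struc}, and then to convert that relation into the asserted bound via Lemma~\ref{gen-bound}. The common preliminary is that in both Methods~\ref{gen-alg-clas} and~\ref{gen-alg-fast} the auxiliary functions are produced by the construction (\ref{auxfunc}) (or by any construction admitting Property~\ref{framework}), so Proposition~\ref{auxfunc-admit} shows that Property~\ref{framework} holds; this is precisely the hypothesis under which Theorem~\ref{estrel-anal-smooth} applies.

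For part~(a), I would show that the CGM satisfies $(\hat{R}'_k)$ with $C_k=\sum_{i=0}^k\lambda_i\delta(x_i)$. First note that $x_{k+1}=z_k$ holds for all $k$, directly from the update $x_k:=z_{k-1}$. Next observe that the corollary's hypothesis $\sigma\beta_{k-1}/\lambda_k\geq L(x_k)$ for all $k\geq0$ becomes, at $k=0$, the condition $\sigma\beta_{-1}/\lambda_0\geq L(x_0)$ of Theorem~\ref{estrel-anal-smooth}(a) and, for $k\geq1$, the condition $\sigma\beta_k/\lambda_{k+1}\geq L(x_{k+1})$ of Theorem~\ref{estrel-anal-smooth}(b) after shifting $k\mapsto k-1$. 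Hence Theorem~\ref{estrel-anal-smooth}(a) provides the base case $(\hat{R}'_0)$ with $C_0=\lambda_0\delta(x_0)$, and the ``moreover'' clause of Theorem~\ref{estrel-anal-smooth}(b) gives the inductive step $(\hat{R}'_k)\Rightarrow(\hat{R}'_{k+1})$ with $C_{k+1}=C_k+\lambda_{k+1}\delta(x_{k+1})$. The bound (\ref{gen-bound-alt-smooth}) then yields $\frac{1}{S_k}\sum_{i=0}^k\lambda_i f(x_{i+1})-f(x^*)\leq\frac{\beta_k l_d(z_k;x^*)+C_k}{S_k}$, and the remaining inequality $f(\hat{x}_k)\leq\frac{1}{S_k}\sum_{i=0}^k\lambda_if(x_{i+1})$ is Jensen's inequality applied to the convex combination $\hat{x}_k=\frac{1}{S_k}\sum_{i=0}^k\lambda_i x_{i+1}$.

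For part~(b), the same scheme applies with Theorem~\ref{estrel-anal-smooth}(a) and~(b') in place of (a) and~(b), and with the plain relation $(R_k)$ and Lemma~\ref{gen-bound} in place of $(\hat{R}'_k)$ and (\ref{gen-bound-alt-smooth}). Here I would check that the FGM update $x_{k+1}=\frac{\sum_{i=0}^k\lambda_i z_i+\lambda_{k+1}z_k}{S_{k+1}}$ is exactly $\frac{S_k\hat{x}_k+\lambda_{k+1}z_k}{S_{k+1}}$ with $\hat{x}_k=\frac{1}{S_k}\sum_{i=0}^k\lambda_i z_i$, and that the prescribed $\hat{x}_{k+1}=\frac{1}{S_{k+1}}\sum_{i=0}^{k+1}\lambda_i z_i$ equals $\frac{S_k\hat{x}_k+\lambda_{k+1}z_{k+1}}{S_{k+1}}$, so that the recursions in Theorem~\ref{estrel-anal-smooth}(b') reproduce the method and its closed-form quantities. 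The curvature hypothesis $\sigma\beta_{k-1}S_k/\lambda_k^2\geq L(x_k)$ reduces at $k=0$ to $\sigma\beta_{-1}/\lambda_0\geq L(x_0)$ (since $S_0=\lambda_0$) and for $k\geq1$ to $\sigma\beta_k S_{k+1}/\lambda_{k+1}^2\geq L(x_{k+1})$ under $k\mapsto k-1$. Induction through Theorem~\ref{estrel-anal-smooth}(a),(b') then gives $(R_k)$ with $C_k=\sum_{i=0}^k S_i\delta(x_i)$ (observing $C_0=S_0\delta(x_0)=\lambda_0\delta(x_0)$), and Lemma~\ref{gen-bound} finishes the proof.

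The only real care needed is the index bookkeeping: checking that the per-step hypotheses and per-step updates of Theorem~\ref{estrel-anal-smooth}(a),(b),(b') telescope into exactly the closed-form $\hat{x}_k$, $C_k$, and curvature conditions stated in the corollary, and in particular that the case $k=0$ is covered by part~(a) of the theorem while $k\geq1$ is covered by part~(b) or~(b'). No new estimates are required; everything is a direct specialization of results already established above.
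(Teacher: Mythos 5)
Your proposal is correct and follows essentially the same route as the paper, which proves this corollary by invoking Proposition~\ref{auxfunc-admit} to secure Property~\ref{framework}, inducting through Theorem~\ref{estrel-anal-smooth} (a), (b), (b') to obtain $(\hat{R}'_k)$ for the CGM and $(R_k)$ for the FGM, and then applying (\ref{gen-bound-alt-smooth}) and Lemma~\ref{gen-bound} respectively. Your index bookkeeping (the shift $k\mapsto k-1$ in the curvature conditions, $S_0=\lambda_0$ so that $C_0=\lambda_0\delta(x_0)=S_0\delta(x_0)$, and the identification of the FGM updates with the recursions in part (b')) is exactly the verification the paper leaves implicit.
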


Particular choices for the parameters $\lambda_k$ and $\beta_k$ in the above estimates simplify the situation.


\begin{corollary}
\label{param-smooth}
Suppose that the problem (\ref{primal-problem}) belongs to the class of structured problems in the special case $L(\cdot) \equiv L >0$ and $\delta(\cdot)\equiv \delta \geq 0$.
\begin{itemize}

\item[(a)] Any sequence $\{(z_{k-1},x_k,\hat{x}_k)\}_{k \geq 0}$ generated by the CGM with $\lambda_k := 1$ and $\beta_k := L/\sigma$ satisfies
\begin{equation}
\label{optconv-clas}
\forall k\geq 0,\quad f(\hat{x}_k) - f(x^*) \leq \frac{1}{k+1}\sum_{i=0}^k f(x_{i+1}) - f(x^*) \leq \frac{Ll_d(z_k;x^*)}{\sigma (k+1)} + \delta
\end{equation}
and
\begin{equation}
\label{seq-bd-clas}
\forall k \geq -1,\quad z_k, x_{k+1}, \hat{x}_{k} \in \left\{x \in Q ~:~ \norm{x-x^*}^2 \leq \frac{2d(x^*)}{\sigma} + \frac{2\delta}{L}(k+1) \right\}.
\end{equation}

\item[(b)] Any sequence $\{(z_{k-1}, x_k, \hat{x}_k)\}_{k \geq 0}$ generated by the FGM with $\lambda_{k} := \frac{k+1}{2}$ and $\beta_k := L/\sigma$ satisfies
\begin{equation}
\label{optconv-fast}
\forall k\geq 0,\quad f(\hat{x}_k) - f(x^*) \leq \frac{4Ll_d(z_k;x^*)}{\sigma (k+1)(k+2)} + \frac{k+3}{3}\delta
\end{equation}
and
\begin{equation}
\label{seq-bd-fast}
\forall k \geq -1,\quad z_k, x_{k+1}, \hat{x}_{k} \in \left\{x \in Q ~:~ \norm{x-x^*}^2 \leq \frac{2d(x^*)}{\sigma} + \frac{\delta}{6L} (k+1)(k+2)(k+3)\right\}.
\end{equation}
\end{itemize}
\end{corollary}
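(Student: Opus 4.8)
The plan is to obtain both parts as direct specializations of Corollary~\ref{alg-est-smooth}, and then to derive the sequence bounds from the convergence estimates via the strong convexity of $d(x)$, exactly in the spirit of the proof of Corollary~\ref{param-nonsm}.

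For part (a) I would first record $S_k = \sum_{i=0}^k \lambda_i = k+1$ and check the step-size hypothesis of Corollary~\ref{alg-est-smooth}(a): with $\beta_{k-1} = L/\sigma$ and $\lambda_k = 1$ one has $\sigma\beta_{k-1}/\lambda_k = L = L(x_k)$, so it holds with equality for every $k \geq 0$. Substituting $\beta_k = L/\sigma$, $S_k = k+1$, and $\sum_{i=0}^k \lambda_i\delta(x_i) = (k+1)\delta$ into the estimate of Corollary~\ref{alg-est-smooth}(a) gives precisely (\ref{optconv-clas}); the middle inequality there is already part of Corollary~\ref{alg-est-smooth}(a) and reflects the convexity of $f$ together with $\hat{x}_k = \frac{1}{k+1}\sum_{i=0}^k x_{i+1}$. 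For part (b), the weights $\lambda_k = (k+1)/2$ give $S_k = \frac14(k+1)(k+2)$, and then $\sigma\beta_{k-1}S_k/\lambda_k^2 = L\,(k+2)/(k+1) \geq L = L(x_k)$ for all $k \geq 0$, so Corollary~\ref{alg-est-smooth}(b) applies. The one nonroutine arithmetic step is $\sum_{i=0}^k S_i = \frac{1}{12}(k+1)(k+2)(k+3)$, which I would justify by the identity $\sum_{j=1}^n j(j+1) = \frac13 n(n+1)(n+2)$; plugging this and $S_k$ into the estimate of Corollary~\ref{alg-est-smooth}(b) yields (\ref{optconv-fast}).

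For the sequence bounds (\ref{seq-bd-clas}) and (\ref{seq-bd-fast}), I would first note that the left-hand sides of (\ref{optconv-clas}) and (\ref{optconv-fast}) are nonnegative since $x^*$ is optimal; this forces $l_d(z_k;x^*) \geq -\frac{\sigma(k+1)}{L}\delta$ in case (a) and $l_d(z_k;x^*) \geq -\frac{\sigma(k+1)(k+2)(k+3)}{12L}\delta$ in case (b). Combining this with $\xi(z_k,x^*) = d(x^*) - l_d(z_k;x^*)$ and $\xi(z_k,x^*) \geq \frac\sigma2\|z_k - x^*\|^2$ bounds $\|z_k - x^*\|^2$ by the claimed radius for every $k \geq 0$; the index $k = -1$ is treated separately using $z_{-1} = x_0$, $d(x_0) = 0$, and $d(x^*) \geq \frac\sigma2\|x^* - x_0\|^2$. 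Denoting by $B_k$ the ball appearing in the statement, its radius is nondecreasing in $k$, so $B_{-1} \subset B_0 \subset B_1 \subset \cdots$; since the update formulas of the CGM and the FGM express each $x_{k+1}$ (for $k\geq 0$) and each $\hat{x}_k$ as convex combinations of $z_0,\dots,z_k$ --- for the FGM one checks that the coefficient of $z_k$ is $(\lambda_k+\lambda_{k+1})/S_{k+1}$ and that the coefficients sum to $1$ --- convexity of $B_k$ yields $x_{k+1}, \hat{x}_k \in B_k$ as well (and $x_0 = z_{-1} \in B_{-1}$).

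The part I expect to need the most care is the bookkeeping rather than any individual inequality: verifying the step-size conditions for all $k$ including $k = 0$, getting the closed forms of $S_k$ and $\sum_{i=0}^k S_i$ exactly right, and --- in the sequence bounds --- remembering that $l_d(z_k;x^*)$ can be negative, so that the nonnegativity of $f(\hat{x}_k) - f(x^*)$ is genuinely needed to keep the radius under control. Once these are in place, everything reduces to substitution into Corollary~\ref{alg-est-smooth} together with the strong-convexity inequality for $\xi$.
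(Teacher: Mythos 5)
Your proposal is correct and follows exactly the route of the paper's own (much terser) proof: substitute the specified parameters into Corollary~\ref{alg-est-smooth} after verifying the step-size conditions, and then obtain the sequence bounds by the same argument as in Corollary~\ref{param-nonsm}, using the nonnegativity of $f(\hat{x}_k)-f(x^*)$ to lower-bound $l_d(z_k;x^*)$, strong convexity of $d$, and the fact that $x_{k+1}$ and $\hat{x}_k$ are convex combinations of the $z_i$'s. All the arithmetic you flag ($S_k$, $\sum_{i=0}^k S_i$, and the step-size checks) is right.
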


\begin{proof}
The estimations (\ref{optconv-clas}) and (\ref{optconv-fast}) can be obtained by substituting the specified parameters to Corollary~\ref{alg-est-smooth}.
By a similar argument as the proof of Corollary~\ref{param-nonsm}, remarking that $x_k \in {\rm conv}\{z_i\}_{i=-1}^{k-1}$ and $\hat{x}_k \in {\rm conv}\{z_i\}_{i=0}^k$, we have (\ref{seq-bd-clas}) and (\ref{seq-bd-fast}).
\end{proof}

Let us consider the case $\delta = 0$ in Corollary~\ref{param-smooth}. This includes the case of a minimization of a convex function with a Lipschitz continuous gradient.
Then the FGM ensures the optimal convergence rate $f(\hat{x}_k)-f(x^*) \leq O\left(\dfrac{LR^2}{k^2}\right)$ where $R=\sqrt{\frac{1}{\sigma}d(x^*)}$ which is faster than the rate $O\left(\dfrac{LR^2}{k}\right)$ guaranteed by the CGM. 
Corollary~\ref{param-smooth} also ensures that the generated sequences {$\{z_k\}$, $\{x_k\}$, and $\{\hat{x}_k\}$ are bounded when $\delta = 0$.

In the case $\delta>0$, a comparison between the CGM and the FGM is not obvious; an immediate fact is that the upper bound in (\ref{optconv-fast}) diverges while the one in (\ref{optconv-clas}) converges to $\delta$. There is a detailed discussion about different situations in \cite[Section 6]{DGN14}.


\subsection{Particular cases for the extended MD and the DA models}
\label{ssec-particularize-struc}

The CGM and the FGM yield some existing methods by adopting particular choices for the auxiliary functions $\{\psi_k(x)\}$.

When we apply the extended MD model (\ref{auxfunc-eMD}) and the DA model (\ref{auxfunc-DA}) to the CGM, it yields the iteration updates
\begin{equation}\label{eMD-iter-smooth}
x_{k+1} := \argmin_{x \in Q}\big\{ \lambda_k l_f(x_k;x) + \beta_k d(x) - \beta_{k-1}l_d(x_k;x) \big\}
\end{equation}
and
\begin{equation*}
x_{k+1} := \argmin_{x \in Q}\left\{ \sum_{i=0}^k \lambda_i l_f(x_i;x) + \beta_k d(x) \right\},
\end{equation*}
respectively (recall (\ref{eMD-model}) and (\ref{DA-model})).

In the composite structure (\ref{obj-composite}), these updates with the choice of $\lambda_k$'s and $\beta_k$'s as in Corollary~\ref{param-smooth} (a) yield the {\it primal} and {\it dual gradient methods} analyzed by Nesterov \cite{Nes13} with known Lipschitz constants.
In the Euclidean setting (\ie, $E$ is a Euclidean space, the norm $\norm{\cdot}$ is induced by its inner product, and $d(x)=\frac{1}{2}\norm{x-x_0}^2$), the extended MD update (\ref{eMD-iter-smooth}) is also closely related to the proximal point method proposed by Fukushima and Mine \cite{FM81}.
In fact, assuming the same conditions in \cite[Corollary at p.996]{FM81}, this method is equivalent to the CGM with $\lambda_k:=1/c_k$ and $\beta_k := 1$.}

The above updates in the Euclidean setting also correspond to the {\it primal} and {\it dual gradient methods} \cite{DGN14} for the inexact oracle model (\ref{i-ora}) by choosing $\lambda_k:=1/L(x_k)$ and $\beta_k :=1/\sigma$.
Since $l_d(z_k;x^*) \leq d(x^*)$, Corollary~\ref{alg-est-smooth} for this case provides estimates for the optimal values with smaller upper bounds than those of \cite[Section 4]{DGN14}; for the dual gradient method, in particular, our estimate does not require the computation of the solution ($y_k$ in \cite[Theorem 3]{DGN14}) of another auxiliary subproblem.

The FGM, on the other hand, provides accelerated versions of the above updates derived from the CGM.
Using the extended MD model (\ref{auxfunc-eMD}) for the FGM, it yields the following method\footnote{The variable $z_{-1} (:=x_0)$ disappeared here for simplicity.}.


\begin{submethod}
\label{MD-fast}
Suppose that the problem (\ref{primal-problem}) belongs to the class of structured problems with a lower convex approximation $l_f(y;x)$ of $f(x)$.
Choose weight parameters $\{\lambda_k\}_{k \geq 0}$ and scaling parameters $\{\beta_k\}_{k \geq -1}$. Set $x_0 := \argmin_{x \in Q}d(x)$ and
\(
\hat{x}_0 := z_0 := \argmin_{x \in Q} \big\{ \lambda_0 l_f(x_0;x)
	 + \beta_0 d(x) - \beta_{-1}l_d(x_0;x) \big\}.
\)
Generate sequences $\{(z_{k},x_k,\hat{x}_k)\}_{k \geq 0}$ by setting
\begin{equation*}
\begin{array}{lcl}
x_{k+1} &:=& \ds\frac{\sum_{i=0}^k \lambda_i z_i + \lambda_{k+1}z_k}{S_{k+1}}, \\[4truemm]
z_{k+1} &:=& \argmin_{x \in Q}\big\{ \lambda_{k+1}l_f(x_{k+1};x) + \beta_{k+1} d(x) - \beta_{k}l_d(z_k;x) \big\}, \\[2truemm]
\hat{x}_{k+1} &:=& \ds\frac{1}{S_{k+1}}\sum_{i=0}^{k+1} \lambda_i z_i
\end{array}
\end{equation*}
for $k\geq 0$.
\end{submethod}

The DA model (\ref{auxfunc-DA}), on the other hand, yields the following method.


\begin{submethod}
\label{DA-fast}
Suppose that the problem (\ref{primal-problem}) belongs to the class of structured problems with a lower convex approximation $l_f(y;x)$ of $f(x)$.
Choose weight parameters $\{\lambda_k\}_{k \geq 0}$ and scaling parameters $\{\beta_k\}_{k \geq -1}$.
Set
$x_0 := \argmin_{x \in Q}d(x)$ and
\(
\hat{x}_0 := z_0 := \argmin_{x \in Q} \left\{ \lambda_0 l_f(x_0;x) + \beta_0 d(x) \right\}.
\)
Generate sequences $\{(z_{k},x_k,\hat{x}_k)\}_{k \geq 0}$ by setting
\begin{equation*}
\begin{array}{lcl}
x_{k+1} &:=& \ds\frac{\sum_{i=0}^k \lambda_i z_i + \lambda_{k+1}z_k}{S_{k+1}},\\[4truemm]
z_{k+1} &:=& \argmin_{x \in Q}\ds\left\{ \sum_{i=0}^{k+1} \lambda_il_f(x_i;x) + \beta_{k+1} d(x) \right\}, \\[4truemm]
\hat{x}_{k+1} &:=& \ds\frac{1}{S_{k+1}}\sum_{i=0}^{k+1} \lambda_i z_i
\end{array}
\end{equation*}
for $k\geq 0$.
\end{submethod}

Apparently, Method~\ref{DA-fast} seems to demand a
computation proportional to $k$ to solve the auxiliary subproblem to
obtain each $z_{k+1}$ due to the weighted summation of $l_f(x_i;x)$'s. However,
for all cases considered in Example~\ref{ex-struc}, excepting 
(\ref{l_f-saddle}), the auxiliary subproblems can be simplified to
the form (\ref{gen-subprob}).

When $\{\beta_k\}$ is constant, $L(\cdot)\equiv L,$ and $\delta(\cdot)\equiv 0$, the above two methods are very similar to the Tseng's methods \cite{Tseng08,Tseng10}.
In particular, choosing $\beta_k := L/\sigma$, $\lambda_0:=1$ and $\lambda_{k+1}:=\frac{1+\sqrt{1+4\lambda_k^2}}{2}$ in Methods~\ref{MD-fast} and~\ref{DA-fast}, they yield the Tseng's second and third APG methods (\ref{Tseng-MD}) and (\ref{Tseng-DA}), respectively.
Therefore, we provide a unified way to analyze these methods
while the Tseng's methods require slightly different approaches for
each case.

For the inexact oracle model (\ref{i-ora}), Methods~\ref{MD-fast} and~\ref{DA-fast} can be seen as accelerated versions of primal and dual gradient methods in \cite{DGN14}.
{\it The fast gradient method} in \cite{DGN14} corresponds to a hybrid of these accelerations which requires to solve two subproblems at each iteration. 
Methods~\ref{MD-fast} and~\ref{DA-fast} solve only one subproblem at each iteration preserving the same complexity as the fast gradient method.

\section{Concluding remarks}

We have proposed a new family of (sub)gradient-based methods for some classes
of convex optimization problems, which include cases such as non-smooth, smooth, inexact oracle model, composite/saddle structure, {\it etc.}
These methods were bundled under a concept we call unifying framework.

We
also provided a unifying way of analyzing these methods which were performed
separately and independently in the past.
This became possible since we have identified a general relation (Property~\ref{framework}) 
which the auxiliary functions of the mirror-descent and the dual-averaging
methods should satisfy. 
As a by-product, the proposed extended MDM removed the compactness assumption and the fixation of total number of iterations a priori which the variants or the original MDM require to ensure the rate $O(1/\sqrt{k})$ of convergence.

There are infinitely many ways of implementing our methods since
Proposition~\ref{auxfunc-admit} shows that we can freely select from the
extended MD model (\ref{auxfunc-eMD}) or the DA model (\ref{auxfunc-DA}) the $l_f(x_i;x)$'s and the scaled proximal function $d(x)$ to construct each subproblem at each iteration.
All of them achieve the optimal complexity.
Also these methods require a solution of only one subproblem per iteration.

From the viewpoint of the relation (\ref{est-rel}), which we call ($R_k$), the extended mirror-descent model (\ref{auxfunc-eMD}) has a `greedy' feature in the following sense; at each iteration, it attains the smallest upper bound $f(\hat{x}_k) \leq \psi_k(z_k)/S_k$ among those bounds for auxiliary functions satisfying Property~\ref{framework} given the previous one $\psi_{k-1}(x)$.

We list some further consideration to extend our approach as follows:

\begin{itemize}
\item
In order to ensure optimal convergence, our methods require knowing the Lipschitz constant of the gradient of the objective function for the class of structured problems (Section~\ref{sec-struc}). There are, however, some approaches which remove this requirement as observed in \cite{BT09, Nes83, Nes13,Nes15}. One can expect to obtain similar results applying these techniques for the proposed methods.

\item
For the case of convex problems with composite structure considered in 
Beck and Teboulle \cite{BT12}, it is possible to obtain a family of 
{\it smoothing-based first order methods} since our methods correspond to the 
{\it fast iterative method}.

\item
The optimal complexity of (sub)gradient methods depends on the assumptions of the objective function. Development of optimal methods assuming strong convexity of the objective function and/or H\"older continuity of its gradient is one of recent topic of interest \cite{DGNs,DGN14,GL12,GL13,Ito15,JN14,NB,NL14,Nes15}. In particular, a generalization of this paper for such convex problems are discussed in \cite{Ito15}.
\end{itemize}

\section*{Acknowledgements}
We are thankful to anonymous referees for giving valuable comments and suggestions that clarified the ideas of the paper considerably.
We also thank Nobuo Yamashita for kindly pointing out Paul 
Tseng's earlier work \cite{Tseng08, Tseng10} and
Yurii Nesterov for his work \cite{NS15}.
This work was partially supported by JSPS Grant-in-Aid for 
Scientific Research (C) number 26330024.


\end{document}